\newtheorem{theorem}{Theorem}[section]
\newtheorem{lemma}{Lemma}[section]
\newtheorem{condition}{Condition}[section]
\newtheorem{remark}{Remark}[section]
\renewcommand{\section}{
         \setcounter{equation}{0}
         \@startsection {section}{1}{\z@}{-3.5ex plus -1ex minus
         -.2ex}{2.3ex plus .2ex}{\normalsize\bf}
}
\renewcommand{\subsection}{
         \@startsection {subsection}{1}{\z@}{-3.5ex plus -1ex minus
         -.2ex}{2.3ex plus .2ex}{\normalsize\bf}
}
\def\reals{{\rm\vrule depth0ex width.4pt\kern-.08em R}}
\def\bbbz{{\mathchoice {\hbox{$\sf\textstyle Z\kern-0.4em Z$}}
{\hbox{$\sf\textstyle Z\kern-0.4em Z$}}
{\hbox{$\sf\scriptstyle Z\kern-0.3em Z$}}
{\hbox{$\sf\scriptscriptstyle Z\kern-0.2em Z$}}}}
\newcommand{\nc}{\newcommand}
\nc{\W}{{\bf W}}
\nc{\A}{{\bf A}}
\nc{\bL}{{\bf L}}
\nc{\bH}{{\bf H}}
\nc{\C}{{\cal C}}
\def\eq#1{(\ref{e:#1})}
\def\elabel#1{\label{e:#1}}
\begin{document}
\begin{center}
\Large \bf Heavy traffic limit theorems for a queue with
Poisson ON/OFF long-range dependent sources and general
service time distribution
\end{center}
\begin{center}
Wanyang Dai \footnote{Supported by National Natural Science
Foundation of China with grant No. 10371053 and grant No. 10971294.}\\
Department of Mathematics\\
and State Key Laboratory of Novel Software Technology\\
Nanjing University\\
Nanjing 210093, China\\
Email: nan5lu8@netra.nju.edu.cn
\end{center}
\begin{center}
Submitted on 20 June 2008 and revised on 26 April 2011
\end{center}

\vskip 0.1 in
\begin{abstract}

In Internet environment, traffic flow to a link is typically modeled
by superposition of ON/OFF based sources. During each ON-period for
a particular source, packets arrive according to a Poisson process
and packet sizes (hence service times) can be generally distributed.
In this paper, we establish heavy traffic limit theorems to provide
suitable approximations for the system under first-in first-out
(FIFO) and work-conserving service discipline, which state that,
when the lengths of both ON- and OFF-periods are lightly tailed, the
sequences of the scaled queue length and workload processes converge
weakly to short-range dependent reflecting Gaussian processes, and
when the lengths of ON- and/or OFF-periods are heavily tailed with
infinite variance, the sequences converge weakly to either
reflecting fractional Brownian motions (FBMs) or certain type of
long-range dependent reflecting Gaussian processes depending on the
choice of scaling as the number of superposed sources tends to
infinity. Moreover, the sequences exhibit a state space
collapse-{\em like} property when the number of sources is large
enough, which is a kind of extension of the well-known Little's law
for M/M/1 queueing system. Theory to justify the approximations is
based on appropriate heavy traffic conditions which essentially mean
that the service rate closely approaches the arrival rate when the
number of input sources tends to infinity.\\

\noindent {\bf Keywords:} reflecting fractional Brownian motion,
reflecting Gaussian process, long-range dependence, queueing
process, weak
convergence\\

\noindent{\bf AMS 2000 Subject Classification:} 60F17,
60K25, 90B20, 90B22
\end{abstract}

\section{Introduction}

ON/OFF sources are widely used to model voice, video and data
traffics in telecommunication systems (see, e.g., Jain and
Routhier~\cite{jairou:pactra}, Nikolaidis and
Akyildiz~\cite{nikaky:ovesou}, Taqqu {\em et
al.}~\cite{taqwil:profun}, Leland et al.~\cite{leltaq:selsim},
Paxson and Floyd~\cite{paxflo:widare}). In particular, stochastic
modeling of queueing systems with ON/OFF long-range dependent data
has become an active area of research. In contrast to most of the
existing achievements in this field, which are based on fluid models
whose outputs are deterministic with constant (service) rates and
whose inputs are certain types of long-range dependent fluid sources
(e.g. Norros~\cite{nor:stomod}, Massoulie and
Simonian~\cite{massim:larbuf}, Debicki and
Mandjes~\cite{debman:trafbm}, Debicki and
Palmowski~\cite{debpal:flumod}), we will model our queueing system
with general service time distribution and the input as a
superposition of Poisson ON/OFF point processes to better capture
the variation of packet sizes and the behavior of real packet
traffic. Concretely, for a particular source, packets arrive
according to a Poisson process during each ON-period. For such a
source, the corresponding traffic exhibits long-range dependence
(see, for instance, Ryu and Lowen~\cite{ryulow:poipro}) when the
lengths of ON- and/or OFF-periods are heavily tailed with infinite
variance. Besides the assumption on the service time distribution,
our system is further supposed to operate under FIFO and
work-conserving discipline.

A special case of the above queueing model is discussed in Cao and
Ramanan~\cite{caoram:poilim}, where the distributions of ON- and
OFF-periods are assumed to be Pareto and exponential respectively
and packet sizes are supposed to be constant. They show that the
sequence of probabilities that steady state unfinished works exceed
a threshold tend to the corresponding probability assuming Poisson
input process when the number of input sources tends to infinity.
Currently, it is not clear whether their result can be extended to
the more general model as presented above. Furthermore, the
dependence of the convergence rates on various parameters of the
system is not shown in their result, e.g., the relationship between
$\rho^{N}$ (traffic intensity, utilization level) and $N$ (the
number of sources).

Due to the above reasons, we will study our queueing system by
employing some other method. Under heavy traffic conditions
(suitable relationships between $\rho$ and $N$ such that the service
rate closely approaches the arrival rate when $N$ tends to
infinity), we will show that, when the lengths of both ON- and
OFF-periods are lightly tailed, the sequences of the scaled queue
length and workload processes converge weakly to short-range
dependent reflecting Gaussian processes, and when the lengths of ON-
and/or OFF-periods are heavily tailed with infinite variance, the
sequences converge weakly to either reflecting fractional Brownian
motions (FBMs) or certain type of long-range dependent reflecting
Gaussian processes depending on the choice of scaling as the number
of input sources tends to infinity. Moreover, the sequences exhibit
a state space collapse-{\em like} property when $N$ is large enough,
which is a kind of extension of the well-known Little's law for
M/M/1 queueing system.

Our heavy traffic limits set up certain connection between the above
physical queueing systems and some existing fluid queueing models.
For example, for a fluid model with constant
output rate and FBM input, the stationary queue content distribution
is asymptotically Weibullian (e.g., Norros~\cite{nor:stomod},
Massoulie and Simonian~\cite{massim:larbuf}, Husler and
Piterbarg~\cite{huspit:extcer}, and more generally, as
summarized in Whitt~\cite{whi:stopro}), namely, the probability of
exceeding buffer level $b$ is roughly of the form exp($-b^{2(1-H)}$)
if FBM is characterized by Hurst parameter $H$. The result can be
applied to derive corresponding probability for our reflecting FBM
after properly managing parameters.

Concerning heavy traffic limit theorems for queueing systems with
long-range dependent inputs, there are only a few achievements until
now besides the one mentioned above in~\cite{caoram:poilim}. In
Debicki and Mandjes~\cite{debman:trafbm} and Debicki and
Palmowski~\cite{debpal:flumod}, authors studied a fluid queueing
system with constant output rate and a superposition of ON/OFF fluid
input sources. In Konstantopoulos and Lin~\cite{konlin:frabro} and
Majewski~\cite{maj:frabro}, instead of discussing superposition
problem, authors considered a single class and feedforward
multiclass queueing networks with long-range dependent interarrival
and service time sequences respectively. The current limit theorems
are the supplements of these existing results. In justifying our
reflecting FBM approximation, we will adopt the simultaneous limit
regime related to FBM in Mikosch {\em et al.}~\cite{mikres:nettra},
in which both $N$ (the number of sources) and $T$ (the time-scaling
parameter) go to infinity at the same time. This procedure provides
us some convenience in employing some ingredient developed
in~\cite{mikres:nettra} to establish the weak convergence for our
scaled queue length and workload processes.

One last point we wish to mention is that we have employed our
theorem on reflecting Gaussian processes in the current paper to
provide a reasonable interpretation (in Dai~\cite{dai:contru}) to
some well-known large-scale computer and statistical experiments
conducted by Cao {\em et al.}~\cite{caocle:effsta}, Cao and
Ramanan~\cite{caoram:poilim}, which reveal some gapes between their
simulation findings and the existing theory on heavy-tail and long
range dependence. In Dai~\cite{dai:contru}, the author finds out
that all the `heavy-tail' random variables used in computer and
network simulations are truncated versions of their real heavy-tail
counterparts due to the limitations of computer hardware and
softwares, and hence they are not heavily tailed ones. So, by
combining the findings in Dai~\cite{dai:contru} and the theorem in
the current paper, we claim in Dai~\cite{dai:contru} that the
findings in Cao {\em et al.}~\cite{caocle:effsta}, Cao and
Ramanan~\cite{caoram:poilim} are more close to practice and but not
to the mathematical assumptions imposed in their models since their
simulations are computer-based ones.

To be convenient for readers, here we summarize some frequently used
notations and terminologies throughout the paper. First, we recall
the definition of u.o.c. convergence. For a function
$f:[0,\infty)\rightarrow R$ and $t\geq 0$, put
$\|f\|_{t}\equiv\sup_{0\leq s\leq t}|f(s)|$, then a sequence of
functions $f^{n}:[0,\infty)\rightarrow R$ is said to converge
uniformly on compact sets (u.o.c.) to $f$ if for each $t\geq 0$,
$\|f^{n}-f\|_{t}\rightarrow 0$ as $n\rightarrow\infty$. Second, we
use $C_{b}(R)$ to denote the set of all bounded and continuous
functions $f$ and $C(R)$ to denote the set of all continuous
functions over the real number space $R$, which are endowed with the
uniform topology. Third, we use $D_{E}[0,\infty)$ to denote the
Skorohod topological space, i.e., the space of $E$-valued functions
that are right continuous and have left-hand limits, which is
endowed with the Skorohod topology (see, e.g., Ethier an
Kurtz~\cite{ethkur:marpro}, Billingsley~\cite{bil:conpro}). Fourth,
we use i.i.d to denote independent and identically distributed, use
a.s. to denote almost surely, use `$\Rightarrow$' to denote
`converge in distribution' or equivalently `converge weakly', and
use `$\sim$' to denote `equals approximately'.

The rest of this paper is organized as follows. In
Section~\ref{SRBM}, we formulate our model, and in
Section~\ref{limthe}, we present our main theorems and they
are proved in Section~\ref{proofI}.

\section{Queueing model formulation} \label{SRBM}

In this section, we consider a queueing system with general service
time distribution and with $N$ i.i.d. Poisson ON/OFF input sources.
Concretely, a Poisson ON/OFF source $n\in\{1,..,N\}$ consists of
independent strictly alternating ON- and OFF-periods, moreover, it
transmits packets to a server according to a Poisson process with
interarrival time sequence $\{u_{n}(i),i\geq 1\}$ and rate $\lambda$
if it is ON and remains silent if it is OFF. The lengths of the
ON-periods are identically distributed and so are the lengths of
OFF-periods, and furthermore, both of their distributions can be
heavily tailed with infinite variance. Specifically, for any
distribution $F$, we denote by $\bar{F}=1-F$ the complementary (or
right tail) distribution, and by $F_{1}$ and $F_{2}$ the
distributions for ON- and OFF-periods with probability density
functions $f_{1}$ and $f_{2}$ respectively. Their means and
variances are denoted by $\mu_{i}$ and $\sigma_{i}^{2}$ for $i=1,2$.
In what follows, we assume that as $x\rightarrow\infty$,
\begin{eqnarray}
&&\mbox{either}\;\;\bar{F}_{i}(x)\sim x^{-\alpha_{i}}L_{i}(x)
\;\;\mbox{with}\;\;1<\alpha_{i}<2\;\;\mbox{or}\;\;
\sigma_{i}^{2}<\infty, \elabel{tailI}
\end{eqnarray}
where $L_{i}>0$ is a slowly varying function at infinity, that is,
\begin{eqnarray}
&&\lim_{x\rightarrow\infty}\frac{L_{i}(tx)}{L_{i}(x)}=1\;\;
\mbox{for any}\;\;t>0. \nonumber
\end{eqnarray}
Note that the mean $\mu_{i}$ is always finite but the variance
$\sigma_{i}^{2}$ is infinite when $\alpha_{i}<2$, and furthermore,
one distribution may have finite variance and the other has an
infinite variance since $F_{1}$ and $F_{2}$ are allowed to be
different. The sizes of transmitted packets (service times) form an
i.i.d. random sequence $\{v^{N}(i)=v(i)/\mu^{N},i\geq 1\}$, where
$\mu^{N}$ is the rate of transmission corresponding to each $N$ and
$\{v(i):i\geq 1\}$ is an i.i.d. random sequence with mean 1 and
variance $\sigma_{v}^{2}$, moreover, $\{v(i):i\geq 1\}$ is
independent of the arrival processes.

To derive our queueing dynamical equation, we introduce more
notations. For a single source $n\in\{1,...,N\}$, it follows from
the explanation in Mikosch {\em et al.}~\cite{mikres:nettra} that
the alternating ON/OFF periods can be described by a stationary
binary process $W_{n}=\{W_{n}(t),t\geq 0\}$: $W_{n}(t)=1$ means that
input traffic is in an ON-period at time $t$ and $W_{n}(t)=0$ means
that input traffic is in an OFF-period, and moreover, the mean of
$W_{n}$ is given by
\begin{eqnarray}
&&\gamma=EW_{n}(t)=P(W_{n}(t)=1)=\mu_{1}/(\mu_{1}+\mu_{2}).
\elabel{wmean}
\end{eqnarray}
Let $T_{n}(t)$ denote the cumulative amount of time which
the $n$th source is ON during time interval $[0,t]$, that is,
\begin{eqnarray}
&&T_{n}(t)=\int_{0}^{t}W_{n}(s)ds. \elabel{tntime}
\end{eqnarray}
Let $A_{n}(t)$ be the total number of packets arrived at the
server from the $n$th source during $[0,t]$, namely,
\begin{eqnarray}
&&A_{n}(t)=\sup\left\{m,\sum_{i=1}^{m}u_{n}(i)\leq T_{n}(t)\right\},
\elabel{annumber}
\end{eqnarray}
which exhibits {\em long range dependence} if $\sigma_{1}$ and
$\sigma_{2}$ are not finite simultaneously (see, for instance, Ryu
and Lowen~\cite{ryulow:poipro}). Moreover, let $A^{N}(t)$ be the
total number of packets transmitted to the server by time $t$ summed
over all $N$ sources, that is,
\begin{eqnarray}
&&A^{N}(t)=\sum_{n=1}^{N}A_{n}(t), \elabel{totalnum}
\end{eqnarray}
and let $S^{N}(t)$ be the total number of packets that finished
service at the server if her keep busy in $[0,t]$, that is,
\begin{eqnarray}
&&S^{N}(t)=\sup\left\{m,V^{N}(m)\leq t\right\}, \elabel{snnumber}
\end{eqnarray}
where
\begin{eqnarray}
&&V^{N}(m)=\sum_{i=1}^{m}v^{N}(i). \elabel{vsumn}
\end{eqnarray}
Then the queue length process $Q^{N}(t)$ which is the number
of packets including the one being served at the server at
time $t$ can be represented by
\begin{eqnarray}
&&Q^{N}(t)=A^{N}(t)-S^{N}(B^{N}(t)), \elabel{qb}
\end{eqnarray}
where we assume that the initial queue length is zero for
convenience, $B^{N}(t)$ is the cumulative amount of time that
the server is busy by time $t$. In the following analysis, we
will employ FIFO and {\em non-idling} service discipline under
which the server is never idle when there are packets waiting
to be served. Hence the total busy time can be represented as
\begin{eqnarray}
&&B^{N}(t)=\int_{0}^{t}I\{Q^{N}(s)>0\}ds, \nonumber
\end{eqnarray}
where $I\{\cdot\}$ is the indicator function. Finally, we
introduce the below workload process which measures the delay
of a packet staying in the system,
\begin{eqnarray}
&&L^{N}(t)=V^{N}(A^{N}(t))-B^{N}(t). \elabel{workloadn}
\end{eqnarray}

\section{Heavy traffic limit theorems}\label{limthe}

We are interested in the behaviors of the queueing process
$Q^{N}(\cdot)$ and the workload process $L^{N}(\cdot)$ under
suitable scaling and under the condition that the load of the server
closely approaches the service capacity when the source number $N$
gets large enough. In order to state our main theorems, we introduce
the below notations for convenience, which are adapted from Taqqu
{\em et al.}~\cite{taqwil:profun}. When $1<\alpha_{i}<2$, set
$a_{i}=(\Gamma(2-\alpha_{i}))/(\alpha_{i}-1)$. When
$\sigma_{i}^{2}<\infty$, set $\alpha_{i}=2$, $L_{i}\equiv 1$ and
$a_{i}=\sigma_{i}^{2}/2$. Moreover, let
\begin{eqnarray}
&&b=\lim_{x\rightarrow\infty}t^{\alpha_{2}-\alpha_{1}}
\frac{L_{1}(x)}{L_{2}(x)}. \nonumber
\end{eqnarray}
If $0<b<\infty$ (implying $\alpha_{1}=\alpha_{2}$ and
$b=\lim_{x\rightarrow\infty}L_{1}(x)/L_{2}(x)$), set
$\alpha_{min}=\alpha_{1}$,
\begin{eqnarray}
&&\pi^{2}=\frac{2(\mu_{2}^{2}a_{1}b+\mu_{1}^{2}a_{2})}
{(\mu_{1}+\mu_{2})^{3}\Gamma(4-\alpha_{min})}\;\;\;
\mbox{and}\;\;\;L=L_{2}; \elabel{pisquareI}
\end{eqnarray}
if, on the other hand, $b=0$ or $b=\infty$,
\begin{eqnarray}
&&\pi^{2}=\frac{2\mu_{max}^{2}a_{min}}
{(\mu_{1}+\mu_{2})^{3}\Gamma(4-\alpha_{min})}\;\;\;
\mbox{and}\;\;\;L=L_{min}, \elabel{pisquareII}
\end{eqnarray}
where min is the index 1 if $b=\infty$ (e.g. if
$\alpha_{1}<\alpha_{2}$) and is the index 2 if $b=0$, max denoting
the other index.

\subsection{Reflecting Gaussian process as the limit}

\begin{condition}\label{conditionI}({\bf heavy traffic condition})
For each $N$, let the service rate $\mu^{N}$ be given by
\begin{eqnarray}
&&\mu^{N}=N\lambda\gamma+\theta\sqrt N, \elabel{ofI}
\end{eqnarray}
where $\theta$ is some positive constant.
\end{condition}
In addition, we need the below conditions on the distributions of
$F_{1}$ and $F_{2}$:
\begin{eqnarray}
&&F_{i}(x)\;\;(i=1,2)\;\;
\mbox{is absolutely continuous in terms of}\;\; x;
\elabel{IfI}\\
&&\mbox{The density}\;\;f_{i}(x)\;\;(i=1,2)\;\;\mbox{of}\;\;
F_{i}\;\;\mbox{satisfies}
\lim_{x\rightarrow 0^{+}}f_{i}(x)<\infty.
\elabel{IIIfI}
\end{eqnarray}
Before we state our main theorems, we define the scaling
processes for each $N$ as follows,
\begin{eqnarray}
&&\tilde{Q}^{N}(\cdot)\equiv\frac{1}{\sqrt N}Q^{N}(\cdot),\;\;\;
\tilde{L}^{N}(\cdot)\equiv\frac{\mu^{N}}{\sqrt N}L^{N}(\cdot).
\elabel{qnln}
\end{eqnarray}
\begin{theorem}\label{heavytra}
Under condtions \eq{ofI}-\eq{IIIfI} and as $N\rightarrow\infty$,
both $\tilde{Q}^{N}(\cdot)$ and $\tilde{L}^{N}(\cdot)$ converge in
distribution under Skorohod topology to a reflecting Gaussian
process $\tilde{Q}(\cdot)$ given by
\begin{eqnarray}
&&\tilde{Q}(\cdot)=\tilde{A}(\gamma\cdot)+\lambda\tilde{T}(\cdot)
-\tilde{S}(\lambda\gamma\cdot)-\theta\cdot+\tilde{I}(\cdot) \geq 0,
\elabel{qalsy}
\end{eqnarray}
where the three processes $\tilde{A}(\gamma\cdot)$,
$\tilde{S}(\lambda\gamma\cdot)$ and $\tilde{T}(\cdot)$ are
independent each other, and furthermore, $\tilde{A}(\gamma\cdot)$ is
a Brownian motion with mean zero and variance function
$\lambda\gamma\cdot$, $\tilde{S}(\lambda\gamma\cdot)$ is also a
Brownian motion with mean zero and variance function
$\lambda\gamma\sigma_{v}^{2}\cdot$ , $\tilde{T}(\cdot)$ is a
Gaussian process with a.s. continuous sample paths, mean zero and
stationary increments, whose covariance and variance functions
satisfy
\begin{eqnarray}
Cov(\tilde{T}(t),\tilde{T}(s))&=&\frac{1}{2}\left(Var(\tilde{T}(t))
+Var(\tilde{T}(s))-Var(\tilde{T}(t-s))\right),
\elabel{vcovariance}\\
Var(\tilde{T}(t))&\sim&\left\{\begin{array}{ll}
       \pi^{2}t^{2H}L(t)&\mbox{as}\;\;t\rightarrow\infty\;\;
       \mbox{for}\;\;1<\alpha_{min}<2,\\
       \pi^{2}t&\mbox{as}\;\;t\rightarrow\infty\;\;\mbox{and}
       \;\;\alpha_{min}=2,
       \end{array}
\right. \elabel{vvariance}
\end{eqnarray}
where $H$ is the Hurst parameter given by $H=(3-\alpha_{min})/2$.
Moreover, $\tilde{I}(\cdot)$ in \eq{qalsy} is a non-decreasing
process with $\tilde{I}(0)=0$ and satisfies
\begin{eqnarray}
&&\int_{0}^{\infty}\tilde{Q}(s)d\tilde{I}(s)=0. \nonumber
\end{eqnarray}
\end{theorem}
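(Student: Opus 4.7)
The plan is to rewrite the scaled queue length as the one-dimensional Skorohod reflection of an explicit driving process, establish a functional central limit theorem for the driving process by decomposing it into three asymptotically independent pieces, and then invoke continuity of the reflection map. Starting from \eq{qb} and using $B^N(t)=t-I^N(t)$ where $I^N$ denotes the cumulative idle time, I would first rewrite
\[
Q^N(t) = \bigl[A^N(t)-N\lambda\gamma t\bigr] - \bigl[S^N(B^N(t))-\mu^N B^N(t)\bigr] - \theta\sqrt{N}\,t + \mu^N I^N(t),
\]
and divide by $\sqrt N$ to obtain $\tilde Q^N = X^N + \tilde I^N$, where $\tilde I^N(t) = (\mu^N/\sqrt N)I^N(t)$ automatically matches the workload scaling in \eq{qnln}. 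Since $\tilde Q^N\ge 0$, $\tilde I^N$ is nondecreasing with $\tilde I^N(0)=0$, and $\int \tilde Q^N d\tilde I^N =0$ by the non-idling discipline, the pair $(\tilde Q^N,\tilde I^N)$ is the 1-dimensional Skorohod reflection of $X^N$, and by continuous mapping it suffices to prove $X^N\Rightarrow X := \tilde A(\gamma\cdot) + \lambda\tilde T(\cdot) - \tilde S(\lambda\gamma\cdot) - \theta\cdot$.

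The centered arrival block is handled via the martingale decomposition $A^N(t)=\sum_{n=1}^N\bigl[N_{\lambda,n}(T_n(t))-\lambda T_n(t)\bigr] + \lambda T^N(t)$, where $N_{\lambda,n}$ is a Poisson process of rate $\lambda$ independent of the ON/OFF indicator $W_n$. The first sum, scaled by $1/\sqrt N$, converges to the Brownian motion $\tilde A(\gamma\cdot)$ by the martingale FCLT because its predictable quadratic variation $N^{-1}\sum_n \lambda T_n(t)\to\lambda\gamma t$ by a law of large numbers. The second term $N^{-1/2}\lambda(T^N(\cdot)-N\gamma\cdot)$ must converge to $\lambda\tilde T(\cdot)$ with the covariance structure \eq{vcovariance}--\eq{vvariance}; here I would invoke the ON/OFF superposition central limit theorem of Taqqu--Willinger--Sherman together with the variance computations in Mikosch \emph{et al.}, using \eq{IfI}--\eq{IIIfI} to guarantee that the stationary binary process $W_n$ is well defined and that the residual ON/OFF lifetime densities are bounded, which is what lets finite-dimensional convergence be promoted to weak convergence in $C[0,T]$. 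For the service block, the renewal FCLT applied to $\{v(i)\}$ combined with $\mu^N/N\to\lambda\gamma$ gives $N^{-1/2}[S^N(\cdot)-\mu^N\cdot]\Rightarrow\sqrt{\lambda\gamma}\,\sigma_v\,B(\cdot)$; coupling this with the fluid estimate $B^N(t)\to t$ \uoc\ (which follows from $I^N\le (\sqrt N/\mu^N)\tilde I^N$ and tightness of $\tilde I^N$) via the random time-change lemma yields $\tilde S(\lambda\gamma\cdot)$.

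Mutual independence of $\tilde A(\gamma\cdot)$, $\tilde T(\cdot)$ and $\tilde S(\lambda\gamma\cdot)$ is inherited from the built-in independence of the Poisson arrival clocks, the ON/OFF sequences and the service-time sequence, so the three marginal limits assemble into the joint weak convergence $X^N\Rightarrow X$ in $D_{\reals}[0,\infty)$. Since the Skorohod reflection map is Lipschitz in the uniform topology, continuous mapping transports this to $(\tilde Q^N,\tilde I^N)\Rightarrow(\tilde Q,\tilde I)$. For the workload, a parallel decomposition $\mu^N L^N(t) = [V^N(A^N(t))\mu^N - A^N(t)] + [A^N(t)-\mu^N t] + \mu^N I^N(t)$ with $V^N(m)\mu^N=\sum_{i=1}^m v(i)$ produces the same driving limit after another application of renewal FCLT and random time-change (the centered cumulative service time and the centered renewal counting process share the Brownian parameter $\lambda\gamma\sigma_v^2$), so $\tilde L^N$ solves the same Skorohod problem and is forced to share the limit $\tilde Q$; this is the state-space-collapse-like feature advertised in the abstract.

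The principal obstacle is the weak convergence $N^{-1/2}(T^N(\cdot)-N\gamma\cdot)\Rightarrow\tilde T(\cdot)$ in the Skorohod topology when $\alpha_{\min}<2$. Finite-dimensional convergence is accessible by cumulant or characteristic-function computations for superposed ON/OFF processes, but establishing tightness in $D_{\reals}[0,\infty)$ under an infinite-variance ON- or OFF-period is delicate: the natural moment criterion demands a uniform fourth-moment bound on increments, which I would obtain by exploiting the stationarity of $W_n$ and the density-boundedness \eq{IIIfI} on the residual lifetimes, plus the regularity provided by \eq{IfI}. Once this tightness is in hand, continuity of the Skorohod map and the random time-change arguments above are standard and assemble the proof for both $\tilde Q^N$ and $\tilde L^N$.
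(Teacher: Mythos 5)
Your overall architecture coincides with the paper's: write $\tilde Q^N=X^N+\tilde I^N$ as a one-dimensional Skorohod reflection of an explicit free process, prove an FCLT for that free process, push the convergence through the (continuous) reflection map, and repeat for the workload via the parallel decomposition of $\mu^N L^N$. Two local steps differ, and one of them needs repair. First, for the arrival fluctuation you use the martingale decomposition $A^N(t)=\sum_{n}[N_{\lambda,n}(T_n(t))-\lambda T_n(t)]+\lambda T^N(t)$ and the martingale FCLT, whereas the paper proves (Lemma~\ref{equivalent}) that $A^N(\cdot)$ has the same law as $\bar A(T^N(\cdot))$ for a single rate-$\lambda$ Poisson process $\bar A$ \emph{independent} of $T^N$, and then applies the ordinary FCLT together with a random-time-change theorem. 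Your route is workable, but your justification of the independence of the limits $\tilde A(\gamma\cdot)$ and $\tilde T(\cdot)$ is not right as stated: the compensated sums $N_{\lambda,n}(T_n(t))-\lambda T_n(t)$ are functions of the $W_n$'s and are not independent of the ON/OFF sequences at any finite $N$, so independence is not ``built in''; you must argue it asymptotically, e.g.\ via convergence of the predictable bracket to the deterministic function $\lambda\gamma t$ and vanishing of the cross-brackets with $\tilde T^N$. The paper's distributional identity delivers exactly this for free. Second, for $\tilde T^N\Rightarrow\tilde T$ you propose finite-dimensional convergence in the style of Taqqu--Willinger--Sherman plus a direct tightness argument, and you single out tightness under $\alpha_{\min}<2$ as the principal obstacle requiring fourth-moment bounds. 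In fact this is not the delicate point: since $T_n(t)-T_n(s)\in[0,|t-s|]$, the increment $\tilde T^N(t)-\tilde T^N(s)$ is a normalized sum of $N$ i.i.d.\ centered variables each bounded by $|t-s|$, so $E[(\tilde T^N(t)-\tilde T^N(s))^2]=Var(T_1(|t-s|))\le |t-s|^2$ uniformly in $N$, and Kolmogorov's criterion already yields $C$-tightness; the heavy tails only affect the growth of $Var(\tilde T(t))$ as $t\to\infty$ in \eq{vvariance}, not the modulus of continuity. The paper avoids the question altogether by citing Debicki--Palmowski for a functional CLT for the superposed indicator process $\tilde W^N$ (this is where \eq{IfI}--\eq{IIIfI} are used) and then writing $\tilde T^N=\int_0^{\cdot}\tilde W^N(s)\,ds$, so that convergence of $\tilde T^N$ follows by continuity of integration. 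With the independence step fixed, your proposal assembles into a correct proof along a genuinely parallel route.
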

\begin{remark}
More discussions about reflected Gaussian processes, readers are
referred to Whitt~\cite{whi:stopro}. From the theorem, we have the
following observations. When $1<\alpha_{min}<2$, we have that
$1/2<H<1$ which implies that the process $\tilde{T}(\cdot)$ exhibits
long range dependence. When $\alpha_{i}=2$ for $i=1,2$, the ON- and
OFF-periods both have finite variance and hence we have that $H=1/2$
and $L=1$, which imply that $\tilde{T}(\cdot)$ exhibits short range
dependence. Finally, the results given in the theorem can be
considered as a kind of extension of Little's formula for M/M/1
queueing model or considered as satisfying certain state space
collapse property.
\end{remark}

\subsection{Reflecting fractional Brownian motion as the limit}

In this subsection, we suppose that at least one of $\sigma_{i}^{2}$
$(i=1,2)$ is infinite. To further discussion, we need to introduce
another time-scaling parameter $R$ and assume that $N=N(R)$ goes to
infinite as $R\rightarrow\infty$. Moreover, we assume that $N$ is
taken to satisfy the below fast growth condition (and see more
discussion in Mikosch {\em et al.}~\cite{mikres:nettra})
\begin{eqnarray}
&&NR\bar{F}_{L}(R)\rightarrow\infty,\;\;\;\mbox{as}\;\;\;
R\rightarrow\infty, \elabel{fgc}
\end{eqnarray}
where $\bar{F}_{L}=\bar{F}_{i}$ if $L=L_{i}$ and $L$ is defined
in \eq{pisquareI} and \eq{pisquareII}. Notice that $\eq{fgc}$
implies $NR^{1-\alpha_{min}}L(R)\rightarrow\infty$.

\begin{condition}\label{condition}({\bf heavy traffic condition})
For each $N$ and $R$, let the service rate $\mu^{R}$ be given
by
\begin{eqnarray}
&&\mu^{R}=N\lambda\gamma
+\theta\left(NR^{1-\alpha_{min}}L(R)\right)^{1/2}, \elabel{heavconI}
\end{eqnarray}
where $\theta$ is some positive constant.
\end{condition}
Next, let $d_{R}$ be the normalization sequence given by
\begin{eqnarray}
&&d_{R}=(NR^{3-\alpha_{min}}L(R))^{1/2}, \elabel{definedt}
\end{eqnarray}
and define
\begin{eqnarray}
&&\tilde{Q}^{R}(\cdot)\equiv\frac{1}{d_{R}}Q^{N}(R\cdot),\;\;
\tilde{L}^{R}(\cdot)\equiv\frac{\mu^R}{d_{R}}L^{N}(R\cdot).
\elabel{tildeqr}
\end{eqnarray}
\begin{theorem}\label{heavytraI}
Assuming that conditions~\eq{fgc} and~\eq{heavconI} hold,
then as $R\rightarrow\infty$, both $\tilde{Q}^{R}(\cdot)$
and $\tilde{L}^{R}(\cdot)$ converge in distribution under
Skorohod topology to a process $\tilde{Q}_{H}(\cdot)$
given by
\begin{eqnarray}
\tilde{Q}_{H}(\cdot)=\lambda\pi B_{H}(\cdot)-
\theta\cdot+\tilde{I}_{H}(\cdot)\geq 0, \elabel{fqalsy}
\end{eqnarray}
where $B_{H}(\cdot)$ is a standard FBM, and $\tilde{I}_{H}(\cdot)$
is a non-decreasing process with $\tilde{I}(0)=0$ and satisfies
\begin{eqnarray}
&&\int_{0}^{\infty}\tilde{Q}_{H}(s)d\tilde{I}_{H}(s)=0. \nonumber
\end{eqnarray}
\end{theorem}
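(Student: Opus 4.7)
The plan is to recast $Q^{N}(\cdot)$ as the Skorohod reflection of a centered net-input process, identify the weak limit of that input as $\lambda\pi B_{H}(\cdot)-\theta\cdot$, and then invoke continuity of the one-dimensional reflection map. Setting $I^{N}(t):=t-B^{N}(t)=\int_{0}^{t}I\{Q^{N}(s)=0\}ds$ and substituting the identity $S^{N}(B^{N}(t))=\mu^{N}t-\mu^{N}I^{N}(t)-[\mu^{N}B^{N}(t)-S^{N}(B^{N}(t))]$ into \eq{qb} gives
\begin{eqnarray}
Q^{N}(t)=X^{N}(t)+\mu^{N}I^{N}(t),\;\;
X^{N}(t):=[A^{N}(t)-N\lambda\gamma t]-(\mu^{N}-N\lambda\gamma)t+[\mu^{N}B^{N}(t)-S^{N}(B^{N}(t))]. \nonumber
\end{eqnarray}
Because $Q^{N}\geq 0$, $\mu^{N}I^{N}(\cdot)$ is non-decreasing, and $\int Q^{N}\,d(\mu^{N}I^{N})=0$, the pair $(Q^{N},\mu^{N}I^{N})$ is the Skorohod reflection of $X^{N}$. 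Under the time-space scaling $t\mapsto Rt$, $\cdot/d_{R}$, the heavy traffic condition \eq{heavconI} yields $(\mu^{N}-N\lambda\gamma)R/d_{R}=\theta$, so defining $\tilde{X}^{R}(t):=X^{N}(Rt)/d_{R}$ and $\tilde{I}^{R}(t):=\mu^{N}I^{N}(Rt)/d_{R}$ preserves the reflection structure and produces the $-\theta t$ drift of \eq{fqalsy} automatically.

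The decisive step is to prove the FBM convergence $d_{R}^{-1}[A^{N}(R\cdot)-N\lambda\gamma R\cdot]\Rightarrow\lambda\pi B_{H}(\cdot)$ on $D[0,\infty)$. Representing the Poisson arrivals during ON-periods via independent unit-rate Poisson processes $\Pi_{n}$ (independent of the environment) as $A_{n}(t)=\Pi_{n}(\lambda T_{n}(t))$, I would split
\begin{eqnarray}
A^{N}(Rt)-N\lambda\gamma Rt=\lambda\sum_{n=1}^{N}[T_{n}(Rt)-\gamma Rt]+\sum_{n=1}^{N}[\Pi_{n}(\lambda T_{n}(Rt))-\lambda T_{n}(Rt)]. \nonumber
\end{eqnarray}
The first sum, normalized by $d_{R}$, is the aggregated centered cumulative ON-time process of Mikosch {\em et al.}~\cite{mikres:nettra}, whose superposition-and-time-scaling limit under the fast growth condition \eq{fgc} is $\lambda\pi B_{H}(\cdot)$ with $H=(3-\alpha_{min})/2$ and $\pi$ given in \eq{pisquareI}--\eq{pisquareII}. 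The second sum is a locally square-integrable martingale in the joint filtration of the environments and Poisson clocks, with predictable variation bounded by $N\lambda Rt$; Doob's inequality then yields an $L^{2}$-bound of order $d_{R}^{-1}\sqrt{NR}=(R^{2-\alpha_{min}}L(R))^{-1/2}\to 0$, which vanishes since $\alpha_{min}<2$ and $L$ is slowly varying. A parallel renewal-CLT estimate on the service side shows $d_{R}^{-1}\sup_{s\leq Rt}|\mu^{N}s-S^{N}(s)|=O((R^{2-\alpha_{min}}L(R))^{-1/2})\to 0$, and composing with the Lipschitz time change $B^{N}(R\cdot)/R\in[0,1]$ preserves this bound. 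Consequently $\tilde{X}^{R}\Rightarrow\lambda\pi B_{H}(\cdot)-\theta\cdot$ in $D[0,\infty)$.

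Continuity of the one-dimensional Skorohod reflection map on $D[0,\infty)$ under the $J_{1}$ topology, combined with the continuous mapping theorem, then gives $\tilde{Q}^{R}\Rightarrow\tilde{Q}_{H}$ with the representation \eq{fqalsy} and the complementary slackness identity for $\tilde{I}_{H}$. For the workload, the identity $L^{N}(t)=V^{N}(A^{N}(t))-B^{N}(t)$ together with $\mu^{N}V^{N}(m)-m=\sum_{i=1}^{m}(v(i)-1)$ and the already-established service-side estimate yields $\mu^{N}L^{N}(t)=Q^{N}(t)+o_{P}(d_{R})$ uniformly on $[0,RT]$, so $\tilde{L}^{R}$ inherits the same limit as $\tilde{Q}^{R}$; this is the state space collapse-like property asserted in the theorem. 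The main obstacle is the decisive convergence step: the FBM scaling limit for the superposition of ON/OFF indicators must be merged with the Poisson layering and the random time change $B^{N}(R\cdot)$, and tightness of $\tilde{X}^{R}$ in the Skorohod topology uniformly in $R$ under \eq{fgc} rests on the delicate balance between the slow (heavy-tailed) renewal environment and the fast Poisson arrivals that the martingale decomposition above is designed to exploit.
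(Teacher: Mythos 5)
Your proposal is correct and follows the same overall architecture as the paper's proof: write $Q^{N}$ as the reflection of a centered net-input process, extract the FBM limit $\lambda\pi B_{H}(\cdot)$ from the superposed centered ON-time process $d_{R}^{-1}(T^{N}(R\cdot)-\gamma NR\cdot)$ via the Mikosch {\em et al.} simultaneous-limit result under \eq{fgc}, show that every other fluctuation is $O_{P}(\sqrt{NR})$ and hence negligible because $d_{R}/\sqrt{NR}=U(R)=(R^{2-\alpha_{min}}L(R))^{1/2}\rightarrow\infty$ (the content of Lemma~\ref{uvwlimit}), and finish with continuity of the reflection map. Where you genuinely differ is in how the Poisson layer is handled: the paper first proves a distributional identity $A^{N}(\cdot)\stackrel{d}{=}\bar{A}(T^{N}(\cdot))$ (Lemma~\ref{equivalent}, argued through finite-dimensional distributions on the path space $D_{{\cal N}}[0,\infty)$), then controls $d_{R}^{-1}(\bar{A}(T^{N}(R\cdot))-\lambda T^{N}(R\cdot))$ by the functional CLT for $\bar{A}$ at scale $(NR)^{1/2}$ composed with the fluid limit of $T^{N}(R\cdot)/(NR)$; you instead use the exact pathwise representation $A_{n}(t)=\Pi_{n}(\lambda T_{n}(t))$ with independent unit-rate Poisson clocks and kill the centered sum $\sum_{n}[\Pi_{n}(\lambda T_{n}(R\cdot))-\lambda T_{n}(R\cdot)]$ by a martingale/Doob $L^{2}$ bound with predictable variation at most $N\lambda Rt$. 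Your route is somewhat more economical (it avoids the finite-dimensional-distribution argument and the random-time-change step entirely and gives a quantitative $O(d_{R}^{-1}\sqrt{NR})$ rate), while the paper's Lemma~\ref{equivalent} has the side benefit of reducing the whole arrival superposition to a single Poisson process, which it reuses elsewhere. Your explicit reduction $\mu^{N}L^{N}(t)=Q^{N}(t)+o_{P}(d_{R})$ for the workload is also a cleaner statement of the state-space-collapse step that the paper only sketches by saying the remaining argument is similar. I see no gap; the minor notational slip $\mu^{N}$ versus $\mu^{R}$ in the drift computation is immaterial, since $(\mu^{R}-N\lambda\gamma)R=\theta d_{R}$ indeed holds by \eq{heavconI} and \eq{definedt}.
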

\begin{remark}Standard FBM is a mean zero Gaussian process with
a.s. continuous sample paths and whose covariance structure is
as follows
\begin{eqnarray}
&&Cov(B_{H}(t),B_{H}(s))=\frac{1}{2}\left(|t|^{2H}+|s|^{2H}-
|t-s|^{2H}\right). \nonumber
\end{eqnarray}
\end{remark}

\section{Proof of main theorems}\label{proofI}

Let $T^{N}(t)$ be the total cumulative amount of ON time summed over
all $N$ sources, that is,
\begin{eqnarray}
&&T^{N}(t)\equiv\sum_{n=1}^{N}T_{n}(t)=\int_{0}^{t}W^{N}(s)ds,
\elabel{tNt}
\end{eqnarray}
where $W^{N}(\cdot)$ is the superposition of $W_{n}(\cdot)$ for
$n=1,...,N$, that is, for each $t\in [0,\infty)$,
\begin{eqnarray}
&&W^{N}(t)=\sum_{n=1}^{N}W_{n}(t). \elabel{Wnt}
\end{eqnarray}
Moreover, let $\bar{A}(t)$ denote the cumulative number of arrival
packets to the server during the time interval $[0,t]$, that is,
\begin{eqnarray}
&&\bar{A}(t)= \sup\left\{m:\;\sum_{i=1}^{m}u(i)\leq t\right\},
\elabel{barA}
\end{eqnarray}
where $\{u(i),i=1,2,...\}$ is an exponentially distributed
random sequence with mean value $1/\lambda$, which is independent
of all processes mentioned before. Then we have the below lemma.
\begin{lemma}\label{equivalent}
The stochastic processes $A^{N}(\cdot)$ in \eq{totalnum} and
$\bar{A}(T^{N}(\cdot))$ in \eq{barA} have the same
distribution.
\end{lemma}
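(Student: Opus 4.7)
My plan is to reduce both processes to time-changes of superposed independent rate-$\lambda$ Poisson processes, then compare their finite-dimensional distributions by conditioning on the ON/OFF environment.

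First, I would observe that the interarrival-time definition \eq{annumber} can be rewritten as $A_{n}(t)=\Pi_{n}(T_{n}(t))$, where $\Pi_{n}(s)\equiv\sup\{m:\sum_{i=1}^{m}u_{n}(i)\leq s\}$ is a rate-$\lambda$ Poisson process built from the exponential sequence $\{u_{n}(i)\}$. By assumption the $\{u_{n}(i)\}$ are independent across $n$ and independent of the ON/OFF indicators $W_{n}$, so $\Pi_{1},\ldots,\Pi_{N}$ are i.i.d.\ rate-$\lambda$ Poisson processes that are independent of $\mathcal{W}\equiv(W_{1},\ldots,W_{N})$ and hence of $(T_{1},\ldots,T_{N})$. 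Consequently, by \eq{totalnum},
\begin{eqnarray*}
A^{N}(t)=\sum_{n=1}^{N}\Pi_{n}(T_{n}(t)).
\end{eqnarray*}
Since $\bar{A}$ in \eq{barA} is itself (in distribution) a rate-$\lambda$ Poisson process $\bar{\Pi}$ that can be taken independent of $\mathcal{W}$, the lemma reduces to showing
\begin{eqnarray*}
\sum_{n=1}^{N}\Pi_{n}(T_{n}(\cdot))\;\stackrel{d}{=}\;\bar{\Pi}(T^{N}(\cdot)).
\end{eqnarray*}

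Second, I would condition on $\mathcal{W}$, which renders each $T_{n}(\cdot)$ (and hence $T^{N}(\cdot)$) a deterministic continuous nondecreasing function with $0\leq T_{n}'\leq 1$. Pick arbitrary $0=t_{0}<t_{1}<\cdots<t_{k}$ and consider the increments. For the left-hand process, the $j$-th increment is
\begin{eqnarray*}
\sum_{n=1}^{N}\bigl[\Pi_{n}(T_{n}(t_{j+1}))-\Pi_{n}(T_{n}(t_{j}))\bigr],
\end{eqnarray*}
which (given $\mathcal{W}$) is a sum over $n$ of independent $\mathrm{Poisson}(\lambda(T_{n}(t_{j+1})-T_{n}(t_{j})))$ random variables, hence $\mathrm{Poisson}(\lambda(T^{N}(t_{j+1})-T^{N}(t_{j})))$. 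Moreover, for different $j$ the intervals $[T_{n}(t_{j}),T_{n}(t_{j+1})]$ are disjoint on the Poisson time axis for each fixed $n$, so the increments are independent across $j$ as well as across $n$. The same computation trivially gives independent $\mathrm{Poisson}(\lambda(T^{N}(t_{j+1})-T^{N}(t_{j})))$ increments for $\bar{\Pi}(T^{N}(\cdot))$. Thus the conditional finite-dimensional distributions coincide; integrating over the law of $\mathcal{W}$ yields equality of unconditional finite-dimensional distributions, which is enough for equality in law of the processes (both being cadlag counting processes, a.s.\ without simultaneous jumps because the $\Pi_{n}$ are independent with continuous compensators).

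The main (and only real) subtlety is the last point: verifying that the left-hand superposition is genuinely a counting process with unit jumps a.s., so that equality of finite-dimensional distributions translates cleanly into equality in law on $D_{\mathbb{Z}_{+}}[0,\infty)$. This follows from the standard fact that two independent Poisson processes on the line almost surely have no common jump times, combined with absolute continuity of $T_{n}$; but it is the one step worth writing out carefully. The rest of the argument is just independent-increments bookkeeping and the Poisson superposition property.
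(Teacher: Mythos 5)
Your proposal is correct and follows essentially the same route as the paper: condition on the ON/OFF environment, use the independent-increments and superposition properties of Poisson processes to show that the conditional finite-dimensional distributions of both processes are those of a rate-$\lambda$ Poisson process evaluated at $T^{N}(\cdot)$, and then integrate out the environment. The only cosmetic differences are that you condition on the individual sources $(W_{1},\ldots,W_{N})$ and write each $A_{n}$ explicitly as a time-changed Poisson process $\Pi_{n}(T_{n}(\cdot))$, whereas the paper conditions on the aggregate $W^{N}$ and treats the total arrival stream as a Poisson process with piecewise-constant rate; also, your closing concern about unit jumps is not actually needed, since equality of finite-dimensional distributions already determines the law of a cadlag process (this is precisely the Kallenberg proposition the paper invokes).
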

\begin{proof}
To show that $A^{N}(\cdot)$ and $\bar{A}(T^{N}(\cdot))$ have the
same distribution, it suffices to show that they have the same
finite-dimensional distribution for an arbitrary positive integer
$k$ and arbitrary numbers $t_{1},...,t_{k}\in [0,\infty)$ according
to Proposition 2.2 in Kallenberg~\cite{kal:foumod}.

Notice that the process $W^{N}(\cdot)$ in \eq{Wnt} takes values
in the set ${\cal N}=\{0,1,...,N\}$ and has the piecewise constant
sample paths given by
\begin{eqnarray}
&&x(t)=\sum_{i=1}^{M}n_{i-1}I\{s_{i-1}\leq t<s_{i}\}, n_{i-1}\in
{\cal N},n_{i-1}\neq n_{i}, \elabel{spath}
\end{eqnarray}
where $s_{0},s_{1},...,s_{M}$ with $s_{0}=0$ and $s_{M}=\infty$ is a
partition of the interval $[0,\infty)$ and $M$ is a positive integer
or infinite. Then we use $D_{{\cal N}}[0,\infty)$ to denote the set
of all of these functions defined in \eq{spath}. Obviously, it is a
subset of the Skorohod topological space $D_{E}[0,\infty)$. Under
the same topology, $D_{{\cal N}}[0,\infty)$ becomes a measurable
space in its own right when endowed with the Borel $\sigma$-field
$A\cap {\cal B}=\{A\cap B,B\in {\cal B}\}$ where ${\cal B}$ is the
Borel $\sigma$-field in $D_{E}[0,\infty)$ (see, for example,
Kallenberg~\cite{kal:foumod}). Then there is a probability
distribution $F^{N}(\cdot)$ on $D_{{\cal N}}[0,\infty)$ for the
process $W^{N}(\cdot)$ in \eq{Wnt}, which is uniquely determined by
the length distributions of ON- and OFF-periods and the source
number $N$ (here, for our purpose, we will not derive the explicit
expression of $F^{N}(\cdot)$).

Basing on the above observation, we first consider the
one-dimensional case. For each $t\geq 0$ and each nonnegative
number $m$, it follows from the independent and stationary
increment properties of Poisson process that
\begin{eqnarray}
&&P\{A^{N}(t)=m\}=\int_{D_{{\cal N}}[0,\infty)}
\left\{\sum_{i=1}^{c}N_{n_{i}\lambda}(\Delta s_{i})=m
|W^{N}(\cdot)=x(\cdot)\right\}F^{N}(dx), \nonumber
\end{eqnarray}
where $x(\cdot)$ is a sample path as defined in \eq{spath},
$N_{n_{i}\lambda}(\Delta s_{i})$ is the number of arrival packets
for the Poisson process with arrival rate $n_{i}\lambda$ during the
time interval $\Delta s_{i}=\min\{s_{i},t\}-\min\{s_{i-1},t\}$ for
$i\in\{1,...,M\}$, and the integer $c$ is given by
$c=1+\sup\{i:s_{i}<t\}$. Then by the independent and stationary
increment properties again, we have,
\begin{eqnarray}
P\{A^{N}(t)=m\}
&=&\int_{D_{{\cal N}}[0,\infty)}
P\left\{\sum_{i=0}^{N}N_{i\lambda}(\Delta\bar{s}_{i})=m
|W^{N}(\cdot)=x(\cdot)\right\}F^{N}(dx)
\nonumber\\
&=&\int_{D_{{\cal N}}[0,\infty)}
P\left\{\sum_{i=1}^{N}N_{\lambda}(i\Delta\bar{s}_{i})=m
|W^{N}(\cdot)=x(\cdot)\right\}F^{N}(dx)
\nonumber\\
&=&\int_{D_{{\cal N}}[0,\infty)}
P\left\{N_{\lambda}(\tau^{N}(t))=m
|W^{N}(\cdot)=x(\cdot)\right\}F^{N}(dx)
\nonumber\\
&=&P\{\bar{A}(T^{N}(t))=m\}, \nonumber
\end{eqnarray}
where $\Delta\bar{s}_{i}$ is the summation of time intervals
during which the arrival rate for the associated Poisson process
is $i\lambda$, and $\tau^{N}(t)$ is the total cumulative amount
of ON time from all $N$ sources up to time $t$ along the sample
path $x(\cdot)$.

Secondly, we consider the two-dimensional case (we will omit
the discussion for more higher-dimensional cases since they are
similar). For any $t_{1},t_{2}\in [0,\infty)$ with $t_{1}<t_{2}$,
and nonnegative integers $m_{1}$ and $m_{2}$, it follows from
the independent and stationary increment properties and the
definition of conditional probability that
\begin{eqnarray}
&&P\left\{A^{N}(t_{1})=m_{1},A^{N}(t_{2})=m_{2}\right\}
\nonumber\\
&=&
\int_{D_{{\cal N}}[0,\infty)}
P\left\{A^{N}(t_{1})=m_{1}|W^{N}(\cdot)=x(\cdot)\right\}
\nonumber\\
&&\;\;\;\;\;\;\;\;\;\;\;\;\;\;
P\left\{A^{N}(t_{2}-t_{1})=m_{2}-m_{1}|
W^{N}(\cdot)=x(\cdot)\right\}
F^{N}(dx)
\nonumber\\
&=&
\int_{D_{{\cal N}}[0,\infty)}
P\left\{N_{\lambda}(\tau(t_{1}))=m_{1}|W^{N}(\cdot)=x(\cdot)\right\}
\nonumber\\
&&\;\;\;\;\;\;\;\;\;\;\;\;\;\;
P\left\{N_{\lambda}(\tau(t_{2}-t_{1}))=m_{2}-m_{1}|
W^{N}(\cdot)=x(\cdot)\right\}
F^{N}(dx)
\nonumber\\
&=&
P\left\{\bar{A}(T^{N}(t_{1}))=m_{1},
\bar{A}(T^{N}(t_{2}))=m_{2}\right\}.
\nonumber
\end{eqnarray}
where $\tau(t_{2}-t_{1})$ is the total cumulative amount of ON time
from all $N$ sources during time interval $[t_{1},t_{2})$ along the
path $x(\cdot)$. Hence we have proved that $A^{N}(\cdot)$ and
$\bar{A}(T^{N}(\cdot))$ have the same distribution.
$\Box$
\end{proof}

\subsection{Proof of Theorem~\ref{heavytra}}

First of all, we define some scaled and centered processes. For each
$t\geq 0$ and $N\geq 1$, let
\begin{eqnarray}
&&
\tilde{\bar{A}}^{N}(t)\equiv
\frac{1}{\sqrt N}\left(\bar{A}(Nt)-\lambda N t\right),
\elabel{difsvI}\\
&&
\tilde{S}^{N}(t)\equiv
\frac{1}{\sqrt N}\left(S^{N}(t)-\mu^{N}t\right),
\elabel{difsvIII}\\
&&
\tilde{T}^{N}(t)\equiv
\frac{1}{\sqrt N}\left(T^{N}(t)-\gamma Nt\right)
=
\int_{0}^{t}\frac{1}{\sqrt N}
\sum_{n=1}^{N}\left(W_{n}(s)-\gamma N\right)ds.
\elabel{tnttilde}
\end{eqnarray}
Then we have the following lemma.
\begin{lemma}\label{threetilde}
There exist three independent processes $\tilde{A}(\cdot)$,
$\tilde{S}(\lambda\gamma\cdot)$ and $\tilde{T}(\cdot)$ such that
\begin{eqnarray}
&&(\tilde{\bar{A}}^{N}(\cdot),\tilde{S}^{N}(\cdot),
\tilde{T}^{N}(\cdot))\Rightarrow
(\tilde{A}(\cdot),\tilde{S}(\lambda\gamma\cdot),\tilde{T}(\cdot))
\;\;\;\mbox{as}\;\;\;N\rightarrow\infty,
\end{eqnarray}
where $\tilde{A}(\cdot)$ is a Brownian motion with mean 0 and
variance function $\lambda\cdot$, $\tilde{S}(\lambda\gamma\cdot)$ is
a Brownian motion with mean zero and variance function
$\lambda\gamma\sigma_{v}^{2}\cdot$, $\tilde{T}(\cdot)$ is a Gaussian
process with stationary increments, mean 0, stationary increments,
whose covariance and variance functions are as given in
\eq{vcovariance}-\eq{vvariance}.
\end{lemma}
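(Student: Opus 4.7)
The plan is to reduce the joint convergence to three marginal FCLTs plus a tightness and independence argument. The three scaled processes are built from mutually independent primitive data families: the exponential inter-arrival times $\{u(i)\}$ driving $\bar A$, the normalized service times $\{v(i)\}$ driving $S^N$, and the i.i.d.\ stationary ON/OFF binary processes $\{W_n(\cdot)\}_{n=1}^N$ driving $T^N$. Once each marginal convergence is in hand and tightness of the joint triple is established, independence of the primitive data transfers automatically to independence of the three limits.

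The convergence of $\tilde{\bar A}^N$ to a Brownian motion of variance $\lambda t$ is a direct application of the Donsker FCLT for the rate-$\lambda$ Poisson process $\bar A(Nt)$ after centering by $\lambda N t$ and rescaling by $\sqrt N$. For $\tilde S^N$ I would write $S^N(t)=N_v(\mu^N t)$, where $N_v$ is the pure renewal counting process with mean-one increments of variance $\sigma_v^2$. The classical renewal FCLT gives $(N_v(s)-s)/\sqrt s\Rightarrow\sigma_v B$, and substituting $s=\mu^N t$ together with the heavy-traffic relation $\mu^N/N\to\lambda\gamma$ from \eq{ofI} yields convergence of $\tilde S^N$ to the stated Brownian motion of variance $\lambda\gamma\sigma_v^2 t$. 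Both of these steps are essentially textbook.

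The substantive work is the convergence of $\tilde T^N$. Set $\bar W^N(s):=N^{-1/2}\sum_{n=1}^N(W_n(s)-\gamma)$, so that $\tilde T^N(t)=\int_0^t \bar W^N(s)\,ds$. Since $\{W_n\}$ are i.i.d.\ stationary binary processes, a multivariate CLT at any finite collection of time points yields convergence of the finite-dimensional distributions of $\bar W^N$ to those of a centered Gaussian process $\bar W$ with stationary increments whose covariance equals the stationary covariance $r(\cdot)$ of a single $W_1$. A tightness argument based on second-moment estimates (using the regularity conditions \eq{IfI}-\eq{IIIfI} on $F_1,F_2$ to control the oscillations of $W^N$) then upgrades this to $\bar W^N\Rightarrow\bar W$, and the continuous mapping theorem applied to the integration functional delivers $\tilde T^N\Rightarrow\tilde T:=\int_0^\cdot \bar W(s)\,ds$. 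The polarization identity \eq{vcovariance} is then a mechanical consequence of the stationary-increment property of $\tilde T$.

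The principal obstacle is verifying the asymptotic variance \eq{vvariance}. Since $\tilde T$ has stationary increments inherited from the stationary process $\bar W$, one has $Var(\tilde T(t))=2\int_0^t(t-u)r(u)\,du$, and the task is to extract its leading order as $t\to\infty$. Under the regularly varying tail assumption \eq{tailI}, renewal theory for alternating ON/OFF processes (as carried out in Taqqu, Willinger, and Sherman \cite{taqwil:profun}) shows that $r(u)$ decays essentially like $u^{-(\alpha_{min}-1)}$ up to a slowly varying factor, and Karamata's theorem then yields $Var(\tilde T(t))\sim\pi^2 t^{2H}L(t)$ with $H=(3-\alpha_{min})/2$ and $\pi^2$ as defined in \eq{pisquareI}-\eq{pisquareII}; I would invoke this calculation rather than reproduce it. In the light-tailed case $\sigma_1^2,\sigma_2^2<\infty$ the same formula degenerates to the linear growth stated in the second branch of \eq{vvariance}. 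Joint tightness of the triple is then immediate from marginal tightness, and mutual independence of $\{u(i)\}$, $\{v(i)\}$, $\{W_n\}$ carries over to independence of the three limit processes, completing the proof sketch.
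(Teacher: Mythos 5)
Your proposal is correct and follows essentially the same route as the paper: Donsker/renewal FCLTs for the first two components, convergence of the centered superposition $\bar W^N$ to a stationary Gaussian process followed by continuous mapping through the integral for $\tilde T^N$, the Taqqu--Willinger--Sherman variance asymptotics for \eq{vvariance}, and independence of the limits inherited from independence of the primitive sequences. The only difference is cosmetic: where you sketch proving $\bar W^N\Rightarrow\bar W$ by finite-dimensional CLT plus a second-moment tightness argument (the one step whose details you leave out, and where conditions \eq{IfI}--\eq{IIIfI} do the work), the paper simply cites Corollary 3.1 of Debicki and Palmowski together with Hahn's CLT in $D[0,1]$ for exactly that statement.
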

\begin{proof}
First of all, it follows from Functional Central Limit Theorem
(e.g., Chen and Yao~\cite{cheyao:funque}) that
\begin{eqnarray}
&&\tilde{\bar{A}}^{N}(\cdot)\equiv \frac{1}{\sqrt
N}\left(\bar{A}(N\cdot) -\lambda N\cdot\right) \Rightarrow
\tilde{A}(\cdot),
\end{eqnarray}
where $\tilde{A}(\cdot)$ is a Brownian motion with mean zero
and variance $\lambda\cdot$.

Secondly, for each $t\geq 0$, we have,
\begin{eqnarray}
\tilde{S}^{N}(t)
&=&
\frac{1}{\sqrt N}\left(\mbox{sup}\left\{k:
v^{N}(1)+...+v^{N}(k)\leq t\right\}-\mu^{N}t\right)
\nonumber\\
&=&
\frac{1}{\sqrt N}\left(\mbox{sup}\left\{k:
v(1)+...+v(k)\leq \mu^{N}t\right\}-\mu^{N}t\right)
\nonumber\\
&=&
\frac{1}{\sqrt N}\left(S_{1}^{N}(N\mu_{1}^{N}t)
-N\mu_{1}^{N}t\right),
\nonumber
\end{eqnarray}
where in the last equation, $\mu_{1}^{N}$ is given by
\begin{eqnarray}
&&\mu_{1}^{N}=\lambda\gamma+\frac{\theta}{\sqrt N}, \nonumber
\end{eqnarray}
and $S_{1}^{N}(\cdot)$ is the counting process corresponding to the
i.i.d. normalized random sequence $\{v(i),i\geq 1\}$ with mean 1 and
variance $\sigma_{v}^{2}$. It is obvious that
$\mu^{N}_{1}\rightarrow\lambda\gamma$ as $N\rightarrow\infty$. Then
by Functional Central Limit Theorem (e.g., Chen and
Yao~\cite{cheyao:funque}), we have
\begin{eqnarray}
&&\tilde{S}^{N}(\cdot)\Rightarrow\tilde{S}(\lambda\gamma\cdot)
\;\;\;\mbox{as}\;\;\;N\rightarrow\infty, \nonumber
\end{eqnarray}
where $\tilde{S}(\lambda\gamma\cdot)$ is a Brownian motion
with mean zero and variance function
$\lambda\gamma\sigma_{v}^{2}\cdot$.

Thirdly, it follows from conditions \eq{IfI}-\eq{IIIfI} and
Corollary 3.1 in Debicki and Palmowski~\cite{debpal:flumod} that the
below convergence in distribution is true
\begin{eqnarray}
\tilde{W}^{N}(\cdot)\equiv\frac{1}{\sqrt N}
\sum_{n=1}^{N}\left(W_{n}(\cdot)-\gamma N\right)
\Rightarrow\tilde{W}(\cdot), \elabel{wnweakc}
\end{eqnarray}
where $\tilde{W}(\cdot)$ is a stationary centered Gaussian process
with a.s. continuous sample paths (by Hahn~\cite{hah:cenlim} since
$W_{n}(t)$ is stochastically continuous) and covariance function
$\eta(\cdot)$ which satisfies (see the proof of Theorem 1 in Taqque
{\em et al.}~\cite{taqwil:profun} for details),
\begin{eqnarray}
Var\left(\int_{0}^{t}\tilde{W}(u)du\right)=
2\int_{0}^{t}\int_{0}^{v}\eta(u)dudv,\elabel{doubint}
\end{eqnarray}
which has the expression as in \eq{vvariance}. By Skorohod
representation theorem (see, for example, Ethier and
Kurtz~\cite{ethkur:marpro}), we can assume that the convergence in
\eq{wnweakc} is u.o.c. Then we have
\begin{eqnarray}
\tilde{T}^{N}(\cdot)=
\int_{0}^{\cdot}\tilde{W}^{N}(s)ds
\rightarrow
\int_{0}^{\cdot}\tilde{W}(s)ds
\equiv
\tilde{T}(\cdot)
\;\;\;\mbox{u.o.c. as}\;\;\;N\rightarrow\infty.
\nonumber
\end{eqnarray}
Thus by the definition of weak convergence on $C[0,\infty)$ (see,
for example, Whitt~\cite{whi:stopro}), Skorohod representation
theorem and Proposition 14.6 in Kallenberg~\cite{kal:foumod}, the
above u.o.c. convergence implies weak convergence. Now, we show that
$\tilde{T}(\cdot)$ is a Gaussian process. Due to \eq{doubint} and
Theorem 7 in page 128 of~\cite{fud:prothe}, $\tilde{W}(\cdot)$ is
mean square integrable in any  given finite interval $[0,T]$, and
therefore it follows from Theorem 3 in page 142 of~\cite{fud:prothe}
that $\tilde{T}(\cdot)$ is a Gaussian process in $[0,T]$. Since for
any given $n\in\{1,2,...,\}$ and any given
$t_{1},...,t_{n}\in[0,\infty)$, we can find an $T_{1}<\infty$ such
that $t_{1},...,t_{n}$ belong to the common interval $[0,T_{1}]$.
Hence the joint distribution of
$\tilde{T}(t_{1}),...,\tilde{T}(t_{n})$ is normal. Thus we can
conclude that $\tilde{T}(\cdot)$ is a Gaussian process in
$[0,\infty)$, whose variance function is as shown in \eq{doubint}.
Since $\tilde{W}(\cdot)$ is stationary, $\tilde{T}(\cdot)$ has
stationary increments and its covariance function is given by the
expression in \eq{vcovariance} due to Proposition 1(b) in Choe and
Shroff~\cite{choshr:supdis}.

Finally, by the independence assumptions and definitions of
related processes, we know that the three processes
$\tilde{\bar{A}}^{N}(\cdot)$, $\tilde{S}^{N}(\cdot)$ and
$\tilde{T}^{N}(\cdot)$ are independent each other for each $N$.
Thus we can conclude that $\tilde{A}(\cdot)$,
$\tilde{S}(\lambda\gamma\cdot)$ and $\tilde{T}(\cdot)$ are
independent each other. Hence we finish the proof of the lemma.
$\Box$
\end{proof}

To complete the proof of the theorem, for each $t\geq 0$, we
rewrite \eq{qb} as the summation of centered processes and
regulated non-decreasing process as follows,
\begin{eqnarray}
Q^{N}(t)&=&X^{N}(t)+I^{N}(t), \elabel{ncenteredQ}
\end{eqnarray}
where
\begin{eqnarray}
X^{N}(t)&=&
\left(A^{N}(t)-\lambda\gamma Nt\right)
-\left(S^{N}\left(B^{N}(t)\right)-
\mu^{N}B^{N}(t)\right)-\sqrt N\theta t,
\nonumber\\
I^{N}(t)&=&\mu^{N}\int_{0}^{t}I\{Q^{N}(s)=0\}ds.
\nonumber
\end{eqnarray}
The process $I^{N}(\cdot)$ is non-decreasing process and can
increase only when the queue length process $Q^{N}(\cdot)$
reaches zero due to the non-idling service discipline and the
fact that $Q^{N}(t)\geq 0$ for all $t\geq 0$.
\begin{lemma}\label{xnlimit}
\begin{eqnarray}
&&\tilde{X}^{N}(\cdot)\equiv\frac{1}{\sqrt N} X^{N}(\cdot)
\Rightarrow\tilde{X}(\cdot)
=\tilde{A}(\gamma\cdot)+\lambda\tilde{T}(\cdot)
-\tilde{S}(\lambda\gamma\cdot)-\theta\cdot
\;\;\;\mbox{as}\;\;N\rightarrow\infty, \nonumber
\end{eqnarray}
where $\tilde{A}(\gamma\cdot)$ is a Brownian motion with mean 0
and variance function $\lambda\gamma\cdot$.
\end{lemma}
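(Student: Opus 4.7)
My plan is to combine Lemma~\ref{equivalent}---which lets me replace $A^{N}(\cdot)$ by $\bar{A}(T^{N}(\cdot))$ in distribution---with the joint weak convergence in Lemma~\ref{threetilde}, and to handle the two random time changes that arise. Adding and subtracting $\lambda T^{N}(t)$ inside the first bracket of $X^{N}$ and writing $S^{N}(B^{N}(t))-\mu^{N}B^{N}(t)=\sqrt{N}\,\tilde{S}^{N}(B^{N}(t))$, I first obtain the identity (in distribution)
\begin{eqnarray*}
\tilde{X}^{N}(t)=\tilde{\bar{A}}^{N}\!\left(\frac{T^{N}(t)}{N}\right)+\lambda\tilde{T}^{N}(t)-\tilde{S}^{N}(B^{N}(t))-\theta t.
\end{eqnarray*}
The problem then reduces to showing that the two time changes $T^{N}(\cdot)/N$ and $B^{N}(\cdot)$ converge u.o.c. in probability to $\gamma e$ and $e$ respectively, where $e(t)=t$, after which a random time change theorem can be applied.

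The first convergence is immediate: $T^{N}(t)/N-\gamma t=\tilde{T}^{N}(t)/\sqrt{N}$, and $\tilde{T}^{N}$ is tight by Lemma~\ref{threetilde}. The second, $B^{N}(\cdot)\to e$, is the main obstacle, because $B^{N}$ is defined implicitly through $Q^{N}$, hence through $X^{N}$ itself, so an apparent circularity must be broken. I propose to do this via a pre-limit uniform bound on $|\tilde{X}^{N}|$ that is insensitive to the $B^{N}$ feedback. Since $Q^{N}=X^{N}+I^{N}\geq 0$ with $I^{N}$ non-decreasing and flat off $\{Q^{N}=0\}$, the Skorohod reflection formula gives $\tilde{I}^{N}(t)=\sup_{0\leq s\leq t}(-\tilde{X}^{N}(s))^{+}$. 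Combining this with the pathwise dominations $T^{N}(s)/N\leq s$ (because $W_{n}\leq 1$) and $B^{N}(s)\leq s$, one obtains
\begin{eqnarray*}
\sup_{s\leq t}|\tilde{X}^{N}(s)|\leq\sup_{r\leq t}|\tilde{\bar{A}}^{N}(r)|+\lambda\sup_{r\leq t}|\tilde{T}^{N}(r)|+\sup_{r\leq t}|\tilde{S}^{N}(r)|+\theta t,
\end{eqnarray*}
whose right-hand side is $O_{p}(1)$ by Lemma~\ref{threetilde}; hence $\tilde{I}^{N}(t)=O_{p}(1)$. Since $t-B^{N}(t)=I^{N}(t)/\mu^{N}=\sqrt{N}\,\tilde{I}^{N}(t)/\mu^{N}$ and $\mu^{N}/\sqrt{N}\to\infty$ by \eq{ofI}, I conclude $t-B^{N}(t)\to 0$ in probability for each fixed $t$; monotonicity of $t\mapsto t-B^{N}(t)$ then upgrades this to u.o.c. convergence.

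With $T^{N}(\cdot)/N\to\gamma e$ and $B^{N}(\cdot)\to e$ u.o.c. in probability in hand, the random time change theorem (see, e.g., Billingsley~\cite{bil:conpro} or Ethier and Kurtz~\cite{ethkur:marpro}) combined with the joint weak convergence of Lemma~\ref{threetilde} yields
\begin{eqnarray*}
\bigl(\tilde{\bar{A}}^{N}(T^{N}(\cdot)/N),\,\tilde{T}^{N}(\cdot),\,\tilde{S}^{N}(B^{N}(\cdot))\bigr)\;\Rightarrow\;\bigl(\tilde{A}(\gamma\cdot),\,\tilde{T}(\cdot),\,\tilde{S}(\lambda\gamma\cdot)\bigr)
\end{eqnarray*}
jointly, with the three limit coordinates mutually independent (the independence being inherited from Lemma~\ref{threetilde} because the two time rescalings are deterministic). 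Continuity of addition applied to the decomposition then delivers the claimed limit $\tilde{X}(\cdot)=\tilde{A}(\gamma\cdot)+\lambda\tilde{T}(\cdot)-\tilde{S}(\lambda\gamma\cdot)-\theta\cdot$, and the Brownian motion $\tilde{A}(\gamma\cdot)$ carries variance function $\lambda\gamma t$ inherited from that of $\tilde{A}(\cdot)$ under the rescaling. In summary, the only non-routine step is the pre-limit $O_{p}(1)$ bound on $\sup|\tilde{X}^{N}|$ that neutralizes the implicit $B^{N}$ dependence; everything else reduces to standard applications of the continuous mapping theorem.
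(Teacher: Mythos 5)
Your proposal is correct and its overall skeleton—replace $A^{N}$ by $\bar{A}(T^{N}(\cdot))$ via Lemma~\ref{equivalent}, decompose $\tilde{X}^{N}$ into $\tilde{\bar{A}}^{N}(T^{N}(\cdot)/N)+\lambda\tilde{T}^{N}(\cdot)-\tilde{S}^{N}(B^{N}(\cdot))-\theta\cdot$, and finish with a random time change argument—matches the paper's. Where you genuinely diverge is the treatment of $B^{N}(\cdot)\rightarrow e$. The paper first upgrades Lemma~\ref{threetilde} (via Skorohod representation) to the fluid limits $\bigl(\frac{1}{N}A^{N}(\cdot),\frac{1}{N}S^{N}(\cdot)\bigr)\rightarrow(\lambda\gamma\cdot,\lambda\gamma\cdot)$ u.o.c.\ a.s.\ and then invokes Theorem 6.5 of Chen and Yao~\cite{cheyao:funque} to conclude $\max_{0\leq s\leq t}|B^{N}(s)-s|\rightarrow 0$; it then closes with a direct $\|\cdot\|_{t}$ triangle-inequality estimate using $B^{N}(t)\leq t$ and continuity of $\tilde{S}$. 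You instead break the apparent circularity intrinsically: the Skorohod reflection formula $\tilde{I}^{N}(t)=\sup_{s\leq t}(\tilde{X}^{N}(s))^{-}$ (the same fact the paper uses later, in the proof of Theorem~\ref{heavytra}) together with the pathwise bounds $T^{N}(s)/N\leq s$ and $B^{N}(s)\leq s$ gives $\sup_{s\leq t}|\tilde{X}^{N}(s)|=O_{p}(1)$, whence $t-B^{N}(t)=\sqrt{N}\,\tilde{I}^{N}(t)/\mu^{N}\rightarrow 0$ because $\mu^{N}/\sqrt{N}\rightarrow\infty$ under \eq{ofI}, with monotonicity of $t-B^{N}(t)$ giving uniformity. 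Your route is self-contained (no appeal to the external fluid-limit theorem), makes explicit why the heavy traffic condition is what neutralizes the $B^{N}$ feedback, and incidentally yields the rate $t-B^{N}(t)=O_{p}(N^{-1/2})$; the paper's route is shorter on the page because it outsources exactly this step to a cited result. Both are valid proofs of the lemma.
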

\begin{proof}
First of all, we prove the following claim to be true
\begin{eqnarray}
&&\tilde{E}^{N}(\cdot)\equiv \frac{1}{\sqrt
N}\left(A^{N}(\cdot)-\lambda\gamma N\cdot\right)
\Rightarrow\tilde{A}(\gamma\cdot)+\lambda\tilde{T}(\cdot)
\;\;\mbox{as}\;\;N\rightarrow\infty. \nonumber
\end{eqnarray}
In fact, by Lemma~\ref{equivalent}, it suffices to prove the
following claim,
\begin{eqnarray}
&&\frac{1}{\sqrt N}\left(\bar{A}(T^{N}(\cdot))-\lambda\gamma
N\cdot\right)\Rightarrow\tilde{A}(\gamma\cdot)
+\lambda\tilde{T}(\cdot)\;\;\mbox{as}\;\;N\rightarrow\infty,
\nonumber
\end{eqnarray}
and it is a direct conclusion of Lemma~\ref{threetilde} and
Corollary 13.3.2 of Whitt~\cite{whi:stopro}. Thus, by
Lemma~\ref{threetilde} and the independence assumption, we have the
below joint weak convergence
\begin{eqnarray}
&&\left(\tilde{E}^{N}(\cdot),\tilde{S}^{N}(\cdot)\right) \Rightarrow
\left(\tilde{A}(\gamma\cdot)+\lambda\tilde{T}(\cdot),
\tilde{S}(\lambda\gamma\cdot)\right). \elabel{jwkcon}
\end{eqnarray}
Moreover, by Skorohod representation theorem, we can assume that the
above convergence is u.o.c. a.s. Thus it follows from \eq{jwkcon}
that
\begin{eqnarray}
&&\left(\frac{1}{N}A^{N}(\cdot),\frac{1}{N}{S}^{N}(\cdot)\right)
\rightarrow(\lambda\gamma\cdot,\lambda\gamma\cdot)\;\;
\mbox{u.o.c.}\;\;\mbox{a.s.}
\elabel{fluidlimc}
\end{eqnarray}
Then, due to \eq{fluidlimc}, the conditions stated in Theorem 6.5
of~\cite{cheyao:funque} are satisfied. So, by the same theorem
of~\cite{cheyao:funque}, we know that, for each $t\geq 0$ and as
$N\rightarrow\infty$,
\begin{eqnarray}
&&\max_{0\leq s\leq t}|B^{N}(s)-s|\rightarrow 0. \elabel{btzo}
\end{eqnarray}
Therefore, by the above discussions and the fact that the associated
limiting processes have a.s. continuous sample paths, we have
\begin{eqnarray}
\left\|\tilde{X}^{N}(\cdot)-\tilde{X}(\cdot)\right\|_{t}
&\leq&
\left\|\tilde{E}^{N}(\cdot)-\tilde{A}(\gamma\cdot)
+\lambda\tilde{T}(\cdot)\right\|_{t}+
\left\|\tilde{S}^{N}(B^{N}(\cdot))
-\tilde{S}(\lambda\gamma B^{N}(\cdot))\right\|_{t}
\nonumber\\
&&+\left\|\tilde{S}(B^{N}(\cdot))
-\tilde{S}(\lambda\gamma\cdot)\right\|_{t}
\nonumber\\
&\leq&
\left\|\tilde{E}^{N}(\cdot)-\tilde{A}(\gamma\cdot)
+\lambda\tilde{T}(\cdot)\right\|_{t}+
\left\|\tilde{S}^{N}(\cdot)
-\tilde{S}(\lambda\gamma\cdot)\right\|_{t}
\nonumber\\
&&+\left\|\tilde{S}(B^{N}(\cdot))
-\tilde{S}(\lambda\gamma\cdot)\right\|_{t}
\nonumber\\
&\rightarrow& 0\;\;\;\mbox{a.s. as}\;\;N\rightarrow\infty, \nonumber
\end{eqnarray}
where in the second inequality, we used the fact that
$B^{N}(t)\leq t$ for each $t\geq 0$ and in the last claim, we
also used the fact that $\tilde{S}(\cdot)$ is continuous. Thus
\begin{eqnarray}
&&\tilde{X}^{N}(\cdot)\rightarrow\tilde{X}(\cdot)\;\;\;\mbox{u.o.c.
a.s. as}\;\;N\rightarrow\infty. \nonumber
\end{eqnarray}
Hence by Proposition 5.3 in Chapter 3 of Ethier and
Kurtz~\cite{ethkur:marpro}, the lemma is proved. $\Box$
\end{proof}

Next, similar to the discussion as in \eq{ncenteredQ}, let
\begin{eqnarray}
&&\tilde{V}^{N}(t)=\frac{1}{\sqrt N}\left(\mu^{N}V^{N}(t)-t\right),
\elabel{vnt}
\end{eqnarray}
and rewrite \eq{workloadn} as the summation of centered processes
and regulated non-decreasing process as follows,
\begin{eqnarray}
&&\mu^{N}L^{N}(t)=Z^{N}(t)+I^{N}(t), \elabel{centerLN}
\end{eqnarray}
where
\begin{eqnarray}
&&Z^{N}(t)=\left(\mu^{N}V^{N}(A^{N}(t))-A^{N}(t)\right)
+\left(A^{N}(t)-N\lambda\gamma t\right)-{\sqrt N}\theta t. \nonumber
\end{eqnarray}
Then we have the following lemma.
\begin{lemma}\label{znttilde}
\begin{eqnarray}
&&(\tilde{V}^{N}(\cdot),\tilde{\bar{A}}^{N}(\cdot),
\tilde{T}^{N}(\cdot))\Rightarrow
(\tilde{V}(\cdot),\tilde{A}(\lambda\gamma\cdot),\tilde{T}(\cdot))
\;\;\;\mbox{as}\;\;\;N\rightarrow\infty, \elabel{vncon}
\end{eqnarray}
where $\tilde{V}(\cdot)$, $\tilde{A}(\cdot)$ and
$\tilde{T}(\cdot)$ are independent Brownian motions, moreover,
$\tilde{V}(\cdot)$ is of mean zero and variance function
$\sigma_{v}^{2}\cdot$, $\tilde{A}(\cdot)$ and
$\tilde{T}(\cdot)$ are given as before. Moreover,
\begin{eqnarray}
&&\tilde{Z}^{N}(\cdot)\equiv\frac{1}{\sqrt N} Z^{N}(\cdot)
\Rightarrow\tilde{X}(\cdot)
=\tilde{A}(\gamma\cdot)+\lambda\tilde{T}(\cdot)
-\tilde{S}(\lambda\gamma\cdot)-\theta\cdot
\;\;\;\mbox{as}\;\;N\rightarrow\infty. \elabel{zvncon}
\end{eqnarray}
\end{lemma}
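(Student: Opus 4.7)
The plan is to deduce the joint weak convergence in \eq{vncon} from Donsker's Functional CLT combined with Lemma~\ref{threetilde} and mutual independence of the service, interarrival, and ON/OFF source sequences, and then to obtain \eq{zvncon} by decomposing $Z^N$ as a random time-change of $\tilde V^N$ plus the centered arrival process already handled in the proof of Lemma~\ref{xnlimit}.

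For the joint convergence, observe that $v^N(i)=v(i)/\mu^N$ gives $\mu^N V^N(m)=\sum_{i=1}^m v(i)$, so $\tilde V^N$ is the Donsker-scaled partial-sum process of the i.i.d.\ sequence $\{v(i)\}$ with mean $1$ and variance $\sigma_v^2$; the Functional CLT (as in Chen and Yao~\cite{cheyao:funque}) therefore yields $\tilde V^N(\cdot)\Rightarrow\tilde V(\cdot)$, a Brownian motion with mean zero and variance function $\sigma_v^2\cdot$. By the standing assumptions the three sequences $\{v(i)\}$, $\{u_n(i)\}$, and $\{W_n(\cdot)\}$ are mutually independent, so $\tilde V^N$, $\tilde{\bar A}^N$, and $\tilde T^N$ are independent for every $N$; together with Lemma~\ref{threetilde}, this makes the product measures converge to the product of the marginal limit laws, giving \eq{vncon}.

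For \eq{zvncon}, the identity
\[
\tilde Z^N(t)=\tilde V^N(A^N(t))+\tilde E^N(t)-\theta t
\]
holds directly from the definitions, where $\tilde E^N(t)=\frac{1}{\sqrt N}(A^N(t)-N\lambda\gamma t)$ is the process already treated in Lemma~\ref{xnlimit}, satisfying $\tilde E^N\Rightarrow \tilde A(\gamma\cdot)+\lambda\tilde T(\cdot)$ together with the fluid convergence $A^N(\cdot)/N\to\lambda\gamma\cdot$ u.o.c.\ a.s.\ after a Skorohod coupling. Passing to a common Skorohod coupling that also carries \eq{vncon}, the random time-change theorem (Corollary~13.3.2 of Whitt~\cite{whi:stopro}, the same device used in the proof of Lemma~\ref{xnlimit}) gives $\tilde V^N(A^N(\cdot))\Rightarrow \tilde V(\lambda\gamma\cdot)$, a Brownian motion with variance function $\lambda\gamma\sigma_v^2\cdot$ which is independent of $(\tilde A(\gamma\cdot),\tilde T(\cdot))$. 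Since $\tilde V(\lambda\gamma\cdot)$ and $-\tilde S(\lambda\gamma\cdot)$ are Brownian motions with identical variance and identical independence structure relative to $(\tilde A(\gamma\cdot),\tilde T(\cdot))$, summing the three limits identifies $\tilde Z^N(\cdot)\Rightarrow\tilde A(\gamma\cdot)+\lambda\tilde T(\cdot)-\tilde S(\lambda\gamma\cdot)-\theta\cdot=\tilde X(\cdot)$, which is \eq{zvncon}.

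The main technical obstacle is the random time-change step: one must couple the weak limits of $\tilde V^N$ and $A^N/N$ on a common probability space, verify the continuity hypothesis of the composition result, and then identify the resulting Brownian limit, via its law and its independence from the arrival-side limits, with $-\tilde S(\lambda\gamma\cdot)$ without disrupting the joint structure needed to identify $\tilde X$.
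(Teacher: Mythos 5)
Your proposal is correct and follows essentially the same route as the paper's own (much terser) proof: the Functional CLT plus independence for the joint convergence \eq{vncon}, then the random-time-change theorem applied to $\tilde V^N(A^N(\cdot))$ with the fluid limit $A^N(\cdot)/N\to\lambda\gamma\cdot$, and finally the identification of $\tilde V(\lambda\gamma\cdot)$ with $-\tilde S(\lambda\gamma\cdot)$ in distribution. Your added care about preserving the independence structure when making that last identification is a point the paper glosses over, but it is the same argument.
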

\begin{proof}
By applying Functional Central Limit Theorem and the same
explanation as in Lemma~\ref{threetilde}, one can prove the
convergence stated in \eq{vncon}. Then it follows from
\eq{vncon}, Lemma~\ref{equivalent} and random time change
theorem that
\begin{eqnarray}
&&\tilde{Z}^{N}(\cdot)\equiv\frac{1}{\sqrt N} Z^{N}(\cdot)
\Rightarrow\tilde{V}(\lambda\gamma\cdot)+\tilde{A}(\gamma\cdot)
+\lambda\tilde{T}(\cdot)-\theta\cdot
\;\;\;\mbox{as}\;\;N\rightarrow\infty. \nonumber
\end{eqnarray}
Notice that $\tilde{V}(\lambda\gamma\cdot)$ and
$-\tilde{S}(\lambda\gamma\cdot)$ have the same distribution,
we can conclude that the claim stated in \eq{zvncon} is true.
$\Box$
\end{proof}

\vskip 0.15in \noindent {\bf Proof of Theorem~\ref{heavytra}} \vskip
0.10in Once the above lemmas are obtained, we can go over the
following standard procedure to finish the proof of the theorem. By
Skorohod representation theorem, we suppose that the convergence in
Lemma~\ref{xnlimit} is u.o.c. Then, by \eq{ncenteredQ} and according
to Theorem 6.1 in Chen and Yao~\cite{cheyao:funque}, there uniquely
exist a pair of regulated mappings $\phi$ and $\psi$, which are
continuous, such that for each $t\geq 0$,
\begin{eqnarray}
\tilde{I}^{N}(t)&=&\frac{1}{\sqrt N}I^{N}(t)
=\phi\left(\tilde{X}^{N}(t)\right) =\sup_{0\leq s\leq
t}\left(\tilde{X}^{N}(s)\right)^{-},
\nonumber\\
\tilde{Q}^{N}(t)&=& \frac{1}{\sqrt
N}Q^{N}(t)=\psi\left(\tilde{X}^{N}(t)\right)
=\tilde{X}^{N}(t)+\phi\left(\tilde{X}^{N}(t)\right)\geq 0, \nonumber
\end{eqnarray}
where $x^{-}(s)=\max\{-x(s),0\}$. Then by continuous
mapping theorem and Lemma~\ref{xnlimit}, we have, as
$N\rightarrow\infty$,
\begin{eqnarray}
I^{N}(\cdot)&\rightarrow&\tilde{I}(\cdot)\equiv
\phi\left(\tilde{X}(\cdot)\right)
\;\;\;\;\mbox{a.s.}\;\;\;\mbox{u.o.c.},
\elabel{nbarIlimit}\\
\tilde{Q}^{N}(\cdot)&\rightarrow&\tilde{Q}(\cdot)
\equiv\psi\left(\tilde{X}(\cdot)\right)\geq 0
\;\;\;\;\mbox{a.s.}\;\;\;\mbox{u.o.c.}
\elabel{nbarqlimit}
\end{eqnarray}
Obviously, $\tilde{I}(\cdot)$ and $\tilde{Q}(\cdot)$
have a.s. continuous sample paths, and moreover,
$\tilde{I}(\cdot)$ is non-decreasing with
$\tilde{I}(0)=0$. Since $\tilde{Q}^{N}(t)\geq 0$ and
$I^{N}(t)$ increases only at times $t$ such that
$\tilde{Q}^{N}(t)=0$, we have for each $T>0$,
\begin{eqnarray}
&&\int_{0}^{T}\tilde{Q}^{N}(t)\wedge 1d\tilde{I}^{N}(t)=0.
\elabel{nbarintlimit}
\end{eqnarray}
Define
\begin{eqnarray}
&&f:x\in R\rightarrow f(x)=x\wedge 1. \nonumber
\end{eqnarray}
Clearly, we have $f\in C_{b}(R)$. Then by \eq{nbarIlimit},
\eq{nbarqlimit}, \eq{nbarintlimit} and Lemma 8.3 in Dai and
Dai~\cite{daidai:heatra}, we have
\begin{eqnarray}
&&\int_{0}^{T}\tilde{Q}(t)\wedge 1d\tilde{I}(t)=0 \;\;\;\mbox{for
all}\;\;\;T>0. \nonumber
\end{eqnarray}
Hence $\tilde{I}(\cdot)$ increases only at times $t$
such that $\tilde{Q}(t)=0$.

Finally, by Lemma~\ref{znttilde} and the same procedure
as above, one can prove the weak convergence for the processes
of $\tilde{L}^{N}(\cdot)$ as $N\rightarrow\infty$. $\Box$\\

\subsection{Proof of Theorem~\ref{heavytraI}}

\begin{lemma}\label{uvwlimit}
Let $\beta=1-\alpha_{min}/2$. Then, as $T\rightarrow\infty$, we
have,
\begin{eqnarray}
&&U(T)\equiv T^{\beta}L(T)^{1/2}\rightarrow\infty,
\elabel{utlimit}\\
&&V(T)\equiv T^{\alpha_{min}/2-1/2}/L(T)^{1/2}\rightarrow\infty.
\elabel{wtlimit}
\end{eqnarray}
\end{lemma}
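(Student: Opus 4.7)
The plan is to reduce both statements to standard growth estimates for slowly varying functions, specifically the Potter/Karamata bound that says: for any slowly varying $L>0$ and any $\epsilon>0$, one has $T^{\epsilon}L(T)\to\infty$ and $T^{-\epsilon}L(T)\to 0$ as $T\to\infty$. Since the current subsection is the heavy-tailed regime (at least one $\sigma_i^2=\infty$), we in fact have $1<\alpha_{min}<2$, so the exponents $\beta=1-\alpha_{min}/2$ and $\gamma\equiv(\alpha_{min}-1)/2$ are both strictly positive. That positivity is what makes the argument go through; if one tried to apply the same statement in the light-tailed case ($\alpha_{min}=2$, $L\equiv 1$) the claim would of course fail, so the implicit restriction $\alpha_{min}<2$ coming from Condition \ref{condition} should be noted up front.

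For \eq{utlimit}, I would first observe that $T^{\beta}L(T)\to\infty$ (apply the Karamata bound with $\epsilon=\beta/2$ to see that eventually $T^{\beta}L(T)\geq T^{\beta/2}\to\infty$, or just invoke the $T^{\epsilon}L(T)\to\infty$ statement directly). This yields $L(T)\geq T^{-\beta}$ for all sufficiently large $T$, hence $L(T)^{1/2}\geq T^{-\beta/2}$, and therefore
\[
U(T)=T^{\beta}L(T)^{1/2}\geq T^{\beta}\cdot T^{-\beta/2}=T^{\beta/2}\longrightarrow\infty.
\]

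For \eq{wtlimit}, I would exploit the opposite Karamata bound. For any $\epsilon>0$, $T^{-2\epsilon}L(T)\to 0$, so $T^{-\epsilon}L(T)^{1/2}\to 0$, i.e.\ $L(T)^{1/2}\leq T^{\epsilon}$ for all $T$ large. Choosing $\epsilon=\gamma/2$ with $\gamma=(\alpha_{min}-1)/2>0$ gives
\[
V(T)=\frac{T^{\gamma}}{L(T)^{1/2}}\geq\frac{T^{\gamma}}{T^{\gamma/2}}=T^{\gamma/2}\longrightarrow\infty.
\]

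There is no real obstacle here; the lemma is essentially a bookkeeping statement about the rate at which the normalizing sequence $d_R$ and the associated scale in Condition \ref{condition} blow up relative to $R$. The only thing to be careful about is that both exponents $\beta$ and $\gamma$ are indeed positive, which uses $1<\alpha_{min}<2$, and that $L^{1/2}$ (as the square root of a positive slowly varying function) still satisfies the standard Potter-type bounds invoked above.
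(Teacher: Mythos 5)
Your proof is correct and rests on the same underlying fact as the paper's: Potter/Karamata-type bounds for regularly varying functions with positive index, made applicable because $1<\alpha_{min}<2$ forces both exponents to be strictly positive. The paper applies the Potter bound (Proposition 0.8 of Resnick) directly to $U(T)$ and $V(T)$ viewed as regularly varying functions of positive index, whereas you apply the standard bounds to $L$ itself and finish with exponent arithmetic; this is only a cosmetic difference.
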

\begin{proof}
Since $L(T)$ is a slowly varying function and $1<\alpha_{min}<2$,
we know that $U(T)$ is a regularly varying function with
index $0<\beta<1/2$, that is, for $x>0$,
\begin{eqnarray}
\lim_{T\rightarrow\infty}\frac{U(Tx)}{U(T)}=x^{\beta}.
\nonumber
\end{eqnarray}
Then, take $0<\epsilon<\beta$, it follows from Proposition 0.8 in
Resnick~\cite{res:extval} that there is a fixed $T_{0}$ such that
for $x\geq 1$ and $T\geq T_{0}$, we have
\begin{eqnarray}
&&U(Tx)>(1-\epsilon)x^{\beta-\epsilon}U(T). \nonumber
\end{eqnarray}
Let $x\rightarrow\infty$ in the above inequality, we know
that \eq{utlimit} is true.

Similarly, $V(T)$ is a regularly varying function with index
$0<\alpha_{min}/2-1/2<1/2$, then by the same reason as above,
we know that \eq{wtlimit} holds.
$\Box$
\end{proof}

Now for each $t\geq 0$, we rewrite \eq{tildeqr} as the summation of
centered processes and regulated non-decreasing process as follows,
\begin{eqnarray}
\tilde{Q}^{R}(t)&=&X^{R}(t)+I^{R}(t), \elabel{centeredQ}
\end{eqnarray}
where
\begin{eqnarray}
X^{R}(t)&=&\frac{1}{d_{R}}
\left(A^{N}(Rt)-\lambda\gamma NRt\right)
-\frac{1}{d_{R}}\left(S^{N}\left(B^{N}(Rt)\right)-
\mu^{R}B^{N}(Rt)\right)-\theta t,
\nonumber\\
I^{R}(t)&=&\frac{\mu^{R}}{d_{R}}Y^{N}(Rt)
=\frac{R\mu^{R}}{d_{R}}\int_{0}^{t}I\{\tilde{Q}^{R}(s)=0\}ds.
\nonumber
\end{eqnarray}
The process $I^{R}(\cdot)$ is non-decreasing process
and can increase only when the queue length process
$\tilde{Q}^{R}(\cdot)$ reaches zero due to the non-idling service
discipline.

\begin{lemma}\label{barheavytraI}
For each $N$ and $R$ and under conditions~\eq{fgc} and~\eq{heavconI},
we have, as $R\rightarrow\infty$, $X^{R}(\cdot)$ converges
weakly to a process $\tilde{X}(\cdot)$, that is,
\begin{eqnarray}
&&X^{R}(\cdot)\Rightarrow\tilde{X}(\cdot) =\lambda\pi
B_{H}(\cdot)-\theta \cdot \elabel{xrconverge}
\end{eqnarray}
where $\pi$ and $B_{H}(\cdot)$ are given in Theorem~\ref{heavytraI}.
\end{lemma}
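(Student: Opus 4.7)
The approach is to decompose $X^R(t)$ into three centered pieces---arising from the Poisson arrival kernel, the ON/OFF superposition, and the service mechanism---plus the deterministic drift $-\theta t$, and then show that only the superposition piece survives in the limit. Using Lemma~\ref{equivalent} to replace $A^N(\cdot)$ by the equivalent-in-distribution process $\bar{A}(T^N(\cdot))$, I would split
\begin{eqnarray}
&&A^N(Rt) - \lambda\gamma NRt \stackrel{d}{=} \bigl[\bar{A}(T^N(Rt)) - \lambda T^N(Rt)\bigr] + \lambda\bigl[T^N(Rt) - \gamma NRt\bigr]. \nonumber
\end{eqnarray}
The drift $-\theta t$ comes out of the heavy-traffic condition~\eq{heavconI} through the identity $(Rt/d_R)(\mu^R - \lambda\gamma N) = \theta t$, as follows from the definition~\eq{definedt} of $d_R$.

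For the main contribution, I would invoke the simultaneous double-limit theorem of Mikosch et al.~\cite{mikres:nettra}, which extends the classical superposition result of Taqqu et al.~\cite{taqwil:profun} to the regime in which both $N$ and $R$ grow under the fast growth condition~\eq{fgc}. That result yields
\begin{eqnarray}
&&\frac{1}{d_R}\bigl(T^N(R\cdot) - \gamma NR\cdot\bigr) \Rightarrow \pi B_H(\cdot), \nonumber
\end{eqnarray}
with $\pi$ and $H=(3-\alpha_{\min})/2$ as in the statement of Theorem~\ref{heavytraI}, so the superposition contribution to $X^R$ converges weakly to $\lambda\pi B_H(\cdot)$.

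The remaining two pieces should vanish after dividing by $d_R$. Conditioning on $T^N$, the Poisson fluctuation $\bar{A}(T^N(Rt)) - \lambda T^N(Rt)$ is $O_p(\sqrt{T^N(Rt)}) = O_p(\sqrt{NR})$, and the renewal FCLT applied to the i.i.d.\ sequence $\{v(i)\}$ with finite variance $\sigma_v^2$ gives $S^N(B^N(Rt)) - \mu^R B^N(Rt) = O_p(\sqrt{\mu^R \cdot Rt}) = O_p(\sqrt{NR})$ since $\mu^R = O(N)$ and $B^N(Rt) \leq Rt$. In either case, dividing by $d_R$ produces the factor $\sqrt{NR}/d_R = 1/U(R)$, which tends to zero by Lemma~\ref{uvwlimit}, so both contributions vanish uniformly on compact sets in probability. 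Joint convergence of the three scaled pieces follows from the mutual independence of the Poisson, service, and ON/OFF mechanisms, and the continuous mapping theorem (together with a Skorohod representation, as in the proof of Theorem~\ref{heavytra}) then delivers~\eq{xrconverge}.

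The main obstacle is transporting the FBM limit for the superposition $T^N(R\cdot)$ from the iterated setting of Taqqu et al.~\cite{taqwil:profun} (first $N\to\infty$, then $R\to\infty$) to the joint regime prescribed by~\eq{fgc}; this is exactly where the machinery of Mikosch et al.~\cite{mikres:nettra} is essential, and it is the reason this particular scaling $d_R$ has been chosen. Once that input is granted, the vanishing of the Poisson and service pieces is routine from classical FCLT bounds and Lemma~\ref{uvwlimit}, and the coefficient $\lambda\pi$ is read off immediately from the decomposition.
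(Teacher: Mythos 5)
Your proposal follows essentially the same route as the paper: the same decomposition via Lemma~\ref{equivalent} into the Poisson fluctuation, the ON/OFF superposition term, the service term, and the drift extracted from~\eq{heavconI}; the same appeal to the Mikosch et al.\ double-limit result for $\frac{1}{d_R}(T^N(R\cdot)-\gamma NR\cdot)\Rightarrow \pi B_H(\cdot)$; and the same observation that the remaining two pieces are $O_p(\sqrt{NR})$ so that dividing by $d_R=U(R)\sqrt{NR}$ kills them by Lemma~\ref{uvwlimit}. The paper merely makes your "conditioning on $T^N$" step explicit through the fluid limit $T^N(R\cdot)/(NR)\rightarrow\gamma\cdot$ and a random time change, but this is the same argument.
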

\begin{proof}
Due to Lemma~\ref{equivalent}, it suffices to prove the below
facts, as $R\rightarrow\infty$,
\begin{eqnarray}
&&\frac{1}{d_{R}}\bar{A}\left(T^{N}(R\cdot)\right)
-\frac{1}{d_{R}}\mu^{R}R\cdot
\elabel{centerBA}\\
&=&\frac{1}{d_{R}} \left(\bar{A}\left(T^{N}(R\cdot)\right) -\lambda
T^{N}(R\cdot)\right)
+\frac{1}{d_{R}}\lambda\left(T^{N}(R\cdot)-\gamma NR\cdot\right)
-\theta\cdot
\nonumber\\
&\Rightarrow&\lambda\pi B_{H}(\cdot)-\theta \cdot, \nonumber
\end{eqnarray}
and
\begin{eqnarray}
&&\frac{1}{d_{R}}\left(S^{N}\left(B^{N}(R\cdot)\right)-
\mu^{R}B^{N}(R\cdot)\right) \Rightarrow 0. \elabel{centerSB}
\end{eqnarray}

As a matter of fact, notice that from the proof of Theorem 1 in
Taqqu {\em et al.}~\cite{taqwil:profun}, we know that the process
$T_{n}(\cdot)$ defined in \eq{tntime} has variance
\begin{eqnarray}
&&Var(T_{n}(t))\sim \pi^{2}t^{3-\alpha_{min}}L(t)\;\;\;\mbox{as}
\;\;\;t\rightarrow\infty. \nonumber
\end{eqnarray}
Then it follows from condition~\eq{fgc} and a similar proof as used
in justifying Theorem 4 in Mikosch {\em et al.}~\cite{mikres:nettra}
that the below weak convergence in the space $C[0,\infty)$ is true,
\begin{eqnarray}
&&\tilde{T}^{R}(\cdot)\equiv \frac{1}{d_{R}}(T^{N}(R\cdot)-\gamma
NR\cdot) \Rightarrow \pi B_{H}(\cdot)\;\;\;\mbox{as}\;\;\;
R\rightarrow\infty, \elabel{fraccon}
\end{eqnarray}
where $B_{H}$ is standard fractional Brownian motion with
$H=(3-\alpha_{min})/2$ and $\pi$ is given in \eq{pisquareI}
and \eq{pisquareII}.

Next, by Functional Central Limit Theorem (see, for example, Chen
and Yao~\cite{cheyao:funque}), we have that
\begin{eqnarray}
&&\tilde{\bar{A}}^{R}(\cdot)\equiv
\frac{1}{(NR)^{1/2}}\left(\bar{A}(NR\cdot)-\lambda NR\cdot\right)
\Rightarrow\xi^{a}(\cdot)\;\;\;\mbox{as}\;\;R\rightarrow\infty,
\elabel{abarwkc}
\end{eqnarray}
where the weak convergence is in the Skorohod topology and
$\xi^{a}(\cdot)$ is a Brownian motion with mean zero and variance
$\lambda$.

Moreover, for each $t\geq 0$, we have,
\begin{eqnarray}
\tilde{S}^{R}(t)
&\equiv&\frac{1}{(NR)^{1/2}}\left(S^{N}(Rt)-\mu^{R}Rt\right)
\nonumber\\
&=&
\frac{1}{(NR)^{1/2}}\left(\mbox{sup}\left\{k:
v(1)+...+v(k)\leq \mu^{R}Rt\right\}-\mu^{R}Rt\right)
\nonumber\\
&=&
\frac{1}{(NR)^{1/2}}\left(S_{1}^{N}(NR\mu_{1}^{R}t)
-NR\mu_{1}^{R}t\right),
\nonumber
\end{eqnarray}
where in the last equation, $\mu_{1}^{R}$ is given by
\begin{eqnarray}
&&\mu_{1}^{R}=\lambda\gamma+(R^{1-\alpha_{min}}L(R))^{1/2}\theta,
\nonumber
\end{eqnarray}
and $S_{1}^{N}(\cdot)$ is the counting process corresponding to
the i.i.d. normalized random sequence $\{v(i),i\geq 1\}$ with
mean 1. Moreover, by Lemma~\ref{uvwlimit}, we have that
$\mu^{R}_{1}\rightarrow\lambda\gamma$ as $R\rightarrow\infty$.
Then by Functional Central Limit Theorem, we have
\begin{eqnarray}
&&\tilde{S}^{R}(\cdot)\Rightarrow\xi^{s}(\lambda\gamma\cdot)\;\;\;
\mbox{as}\;\;\;R\rightarrow\infty, \elabel{tilsr}
\end{eqnarray}
where $\xi^{s}(\lambda\gamma\cdot)$ is a Brownian motion with
mean zero and variance $\lambda\gamma\sigma_{v}^{2}$.

Now notice the independent assumption among the processes
$\tilde{\bar{A}}^{R}(\cdot)$, $\tilde{S}^{R}(\cdot)$ and
$\tilde{T}^{R}(\cdot)$, and the properties that Brownian motion and
fractional Brownian motion have a.s. continuous sample paths, then
by Skorohod representation theorem (see, for example, Ethier and
Kurtz~\cite{ethkur:marpro}), we can and will assume that the
convergence in \eq{fraccon}-\eq{tilsr} is u.o.c. Thus, by
Lemma~\ref{uvwlimit} and for each $t\geq 0$, as
$R\rightarrow\infty$,
\begin{eqnarray}
\left\|\frac{1}{NR}\left(T^{N}(R\cdot)-\gamma NR\cdot\right)\right\|_{t}
&=&\frac{1}{N^{1/2}V(R)}
\left\|\frac{1}{d_{R}}\left(T^{N}(R\cdot)-\gamma NR\cdot\right)\right\|_{t}
\nonumber\\
&\rightarrow& 0\;\;\;\;\;\mbox{a.s.}, \nonumber
\end{eqnarray}
which implies that as $R\rightarrow\infty$,
\begin{eqnarray}
&&T_{1}^{R}(\cdot)\equiv\frac{1}{NR}T^{N}(R\cdot)\rightarrow\gamma\cdot
\;\;\;\;\;\mbox{a.s.}\;\;\;\mbox{u.o.c.} \elabel{rfluidI}
\end{eqnarray}
Therefore, by \eq{abarwkc}, \eq{rfluidI}, Random Change of Time
Theorem in Billinsley~\cite{bil:conpro} and Lemma~\ref{uvwlimit}, we
have
\begin{eqnarray}
&&\frac{1}{d_{R}}
\left(\bar{A}\left(T^{N}(R\cdot)\right)-\lambda T^{N}(R\cdot)\right)
\elabel{rfluidII}\\
&&=\frac{1}{U(R)(NR)^{1/2}}
\left(\bar{A}^{N}(NRT^{R}_{1}(\cdot))-\lambda NRT^{R}_{1}(\cdot)\right)
\nonumber\\
&&\rightarrow 0\;\;\;\;\;\mbox{a.s.}\;\;\;\mbox{u.o.c.}
\nonumber
\end{eqnarray}

Next, notice that, for each $t\geq 0$,
\begin{eqnarray}
&&B^{R}_{1}(t)\equiv\frac{B^{N}(Rt)}{R}\leq t. \nonumber
\end{eqnarray}
Then, it follows from \eq{tilsr} and Lemma~\ref{uvwlimit} that
\begin{eqnarray}
&&\left\|\frac{1}{d_{R}}\left(S^{N}\left(B^{N}(Rt)\right)-
\mu^{R}B^{N}(Rt)\right)\right\|_{t}
\nonumber\\
&&=\frac{1}{U(R)}\left\|\frac{1}{(NR)^{1/2}}
\left(S^{N}\left(RB^{N}_{1}(\cdot)\right)-
\mu^{R}RB^{N}_{1}(\cdot)\right)\right\|_{t}
\nonumber\\
&&\leq\frac{1}{U(R)}\left\|\frac{1}{(NR)^{1/2}}
\left(S^{N}\left(R\cdot\right)-
\mu^{R}R\cdot\right)\right\|_{t}
\nonumber\\
&&\rightarrow 0\;\;\;\;\;\mbox{a.s.}
\nonumber
\end{eqnarray}
Thus, we have, as $R\rightarrow\infty$,
\begin{eqnarray}
&&\frac{1}{d_{R}}\left(S^{N}\left(\bar{B}^{N}(R\cdot)\right)-
\mu^{R}\bar{B}^{N}(R\cdot)\right)\rightarrow 0\;\;\;\;\;
\mbox{a.s.}\;\;\;\mbox{u.o.c.} \elabel{rfluidIII}
\end{eqnarray}
Hence by \eq{fraccon}, \eq{rfluidII} and \eq{rfluidIII}, as
$R\rightarrow\infty$, the convergence stated in \eq{centerBA}
and \eq{centerSB} is true.
$\Box$
\end{proof}

\vskip 0.10in
The remaining proof of Theorem~\ref{heavytraI} is similar to that
used in justifying Theorem~\ref{heavytra}. Hence we omit it here.

\end{document}